\documentclass[pdflatex,sn-mathphys-ay]{sn-jnl}

\usepackage{amsmath,amssymb,amsthm}
\usepackage{graphicx}
\usepackage{bm}
\usepackage[ruled,vlined]{algorithm2e}
\usepackage{booktabs}

\newcommand{\T}[1]{\mathcal{#1}}    
\newcommand{\V}[1]{\bm{#1}}     
\newcommand{\M}[1]{\bm{#1}}     
\newcommand{\C}{\mathbb{C}}         
\newcommand{\R}{\mathbb{R}}         
\newcommand{\tr}{\operatorname{tr}}
\newcommand{\norm}[1]{\left\lVert#1\right\rVert_{F}} 
\newcommand{\bcirc}{\operatorname{bcirc}}

\newcommand{\diag}{\operatorname{diag}}

\theoremstyle{thmstyleone}
\newtheorem{theorem}{Theorem}[section]
\newtheorem{lemma}[theorem]{Lemma}
\newtheorem{proposition}[theorem]{Proposition}
\newtheorem{corollary}[theorem]{Corollary}

\theoremstyle{thmstyletwo}
\newtheorem{example}[theorem]{Example}

\theoremstyle{thmstylethree}
\newtheorem{definition}[theorem]{Definition}
\newtheorem{remark}[theorem]{Remark}

\numberwithin{equation}{section}
\begin{document}

\title[Hamiltonian and Symplectic Tensors in the T-product Algebra]{Hamiltonian and Symplectic Tensors in the T-product Algebra}

\author[1,2,3]{\fnm{Susana} \sur{L\'opez-Moreno}}
\author*[4]{\fnm{Taehyeong} \sur{Kim}}\email{thkim0519@knu.ac.kr}

\affil[1]{\orgdiv{Department of Mathematics}, \orgname{Pusan National University}, \orgaddress{\city{Busan}, \postcode{46241}, \country{Republic of Korea}}}
\affil[2]{\orgdiv{Humanoid Olfactory Display Center}, \orgname{Pusan National University}, \orgaddress{\city{Busan}, \postcode{46241}, \country{Republic of Korea}}}
\affil[3]{\orgdiv{Industrial Mathematics Center}, \orgname{Pusan National University}, \orgaddress{\city{Busan}, \postcode{46241}, \country{Republic of Korea}}}
\affil*[4]{\orgdiv{Nonlinear Dynamics and Mathematical Application Center}, \orgname{Kyungpook National University}, \orgaddress{\city{Daegu}, \postcode{41566}, \country{Republic of Korea}}}

\abstract{
We study Hamiltonian and symplectic tensor structures in the T-product algebra. 
We define T-Hamiltonian and T-symplectic tensors and characterize them through their Fourier-domain slices. 
For T-Hamiltonian tensors we establish the standard block form and the spectral symmetry of T-eigenvalues, while for T-symplectic tensors we derive the inverse and exponential-map properties. 
Our main result is a constructive T-Williamson normal form for tensors whose Fourier-domain slices are real symmetric positive-definite matrices. 
We also show that, under the Hermitian symplectic convention adopted here, this decomposition does not extend directly to arbitrary Hermitian positive-definite Fourier-domain slices, and we derive a real-valued recovery criterion under Fourier conjugate symmetry. 
Numerical experiments verify the construction, exhibit runtime trends consistent with the slice-wise complexity $O(pn^3)$, and illustrate the framework on a Fourier-domain encoding of covariance-matrix families arising in continuous-variable quantum dynamics.}

\keywords{T-product, structured tensors, Hamiltonian tensors, symplectic tensors, Williamson normal form, numerical validation}

\pacs[Mathematics Subject Classification]{15A18, 15A69, 65F99}

\maketitle

\section{Introduction}
\label{sec:intro}

Hamiltonian and symplectic structures play a central role in classical mechanics, control theory, and structure-preserving numerical analysis; see, for example, the classical references \cite{mehrmann1991autonomous,hairer2006geometric,arnol2013mathematical} and recent developments in \cite{sun2020structure,pagliantini2021dynamical,mehrmann2023control}. 
For matrix-valued problems, these structures encode spectral symmetries, invariants, and canonical forms that are essential both in theory and in computation (\cite{paige1981schur,mehrmann1991autonomous}). 

In many applications, however, one encounters not a single structured matrix but a family of matrices indexed by time, frequency, or a control parameter (\cite{weedbrook2012gaussian,adesso2014continuous}). 
Such problems call for an algebraic setting that stores the family as one object while preserving the structural features of the underlying matrix theory (\cite{bentbib2025tensor,be2025numerical}).

The CANDECOMP/PARAFAC and Tucker decompositions, introduced by \cite{kolda2009tensor, de2000multilinear}, are effective for compression and representation, but they lack a closed algebraic structure that directly extends matrix multiplication.
Consequently, they are not suitable for transferring Hamiltonian or symplectic structures to tensor-valued data. 
The T-product, given by \cite{kilmer2011factorization, kilmer2013third}, is a better framework for this purpose, as it provides a matrix-mimetic algebraic structure. 
Its Fourier-domain representation reduces tensor multiplication to independent matrix multiplications on frontal slices, which allows matrix concepts to be lifted to third-order tensors in a natural and computationally tractable way, as seen in \cite{kilmer2013third, ju2025geometric}.

Recent work in the T-product setting has developed the notions of T-Hermitian, T-unitary, and T-positive-definite tensors, together with geometric and computational tools, such as the T-geometric mean (\cite{ju2025geometric}). 
Related results for Hamiltonian and symplectic tensors have also been studied in an Einstein-product framework in connection with multilinear control systems (\cite{wang2024algebraic}). 
Nevertheless, a systematic Hamiltonian-symplectic theory in the T-product algebra, including a tensor analogue of Williamson's normal form, is still missing.

We formulate T-Hamiltonian and T-symplectic tensors in the T-product algebra and study their structural properties through the Fourier-domain slice representation. 
Our main result is a constructive T-Williamson normal form for tensors whose Fourier-domain slices are real symmetric positive-definite matrices. 
The construction is done slice-wise in the Fourier domain and yields an algorithm with arithmetic complexity of order $O(pn^3)$.
We also show that this result does not extend directly
to arbitrary Hermitian positive-definite Fourier-domain slices under the convention $S^HJS = J$.

The main contributions of this paper are summarized as follows:
\begin{itemize}
    \item We introduce T-Hamiltonian and T-symplectic tensors in the T-product algebra and characterize them by their Fourier-domain slices.
    \item We establish basic structural properties, including the block form and spectral symmetry for T-Hamiltonian tensors, and the inverse and exponential-map properties for T-symplectic tensors.
    \item We introduce a constructive T-Williamson normal form for tensors whose slices in the Fourier domain  are real symmetric positive-definite matrices, and provide an explicit algorithm for its computation. We also derive the associated slice-wise algorithm, and show that an arbitrary Hermitian positive-definite extension fails under the convention $S^H JS = J$.
    \item We derive a real-valued recovery criterion under Fourier conjugate symmetry and provide numerical validation, including an  example from continuous variable quantum dynamics.
\end{itemize}

This paper is organized as follows. 
Section~\ref{sec:prelim} reviews the necessary background on the T-product. 
Section~\ref{sec:framework} introduces our main theoretical framework, defining T-Hamiltonian and T-symplectic tensors and developing the T-Williamson normal form. 
Section~\ref{sec:application} contains numerical validation and a Fourier-domain quantum illustration of the proposed framework. 
Section~\ref{sec:conclusion} contains concluding remarks and several directions for future work.

\section{Preliminaries}
\label{sec:prelim}

In this paper, tensors are denoted by calligraphic letters (e.g., $\T{A}$), matrices by bold capital letters (e.g., $\M{A}$), and vectors by bold lowercase letters (e.g., $\V{a}$). 
The space of third-order tensors of size $m \times n \times p$ with complex entries is denoted by $\C^{m\times n\times p}$. The frontal slices of a tensor $\T{A} \in \C^{m \times n \times p}$ are denoted $\M{A}^{(k)} \in \C^{m \times n}$ for $k=1, \dots, p$.
The inner product between two tensors $\T{A} = [a_{ijk}]$ and $\T{B} = [b_{ijk}]$ in $\C^{m\times n\times p}$ is defined as $\langle \T{A}, \T{B} \rangle_{F} = \sum_{i,j,k} \bar{a}_{ijk}b_{ijk}$, where $\bar{a}_{ijk}$ is the complex conjugate of $a_{ijk}$. 
The Frobenius norm associated with this inner product is expressed as $\norm{\T{A}} = \sqrt{\langle \T{A},\T{A} \rangle_{F}}$.

The T-product algebra is based on the mapping of tensors to block circulant matrices (\cite{kilmer2013third}), and the \emph{T-product} of two third-order tensors $\T{A}*\T{B}$ can be defined as the operation such that $\bcirc(\T{A}*\T{B})=\bcirc(\T{A})\bcirc(\T{B})$, i.e., as the matrix multiplication of the corresponding block circulant matrices of the tensors.
The block-circulant representation is specially useful because it has a Fourier
diagonalization. After applying the DFT in the third mode, a block-circulant matrix
decomposes into independent matrix blocks, so tensor multiplication is reduced to slice-wise matrix multiplication.
This property is fundamental for the efficient computation of the T-product. 
In fact, the DFT is just one of several linear transformations that can be used to define a tensor product;
alternative tensor multiplications can also be defined using other linear maps, as shown by~\cite{kernfeld2015tensor,keegan2025projected,ju2025computation}.
In this paper, we will exclusively use the definition of the T-product as the result of applying the DFT transform. 
The following standard lemma from \cite{kilmer2011factorization} records the Fourier block-diagonalization of block circulant matrices.

Throughout this paper, we will use $\widehat{\T{A}}$ to denote the tensor $\T{A}$ in the Fourier domain.

\begin{lemma}
\label{LemmaFourierBlockDiag}
Let $\T{A} \in \C^{m \times n \times p}$. 
The block circulant matrix $\bcirc(\T{A})$ can be diagonalized as
\begin{equation}\label{eqFourierBlockDiagMN}
    \bcirc(\T{A})=(\M{F}_{p}^H \otimes \M{I}_{m}) \diag(\widehat{\M{A}}^{(1)},\widehat{\M{A}}^{(2)},\ldots,\widehat{\M{A}}^{(p)}) (\M{F}_{p} \otimes \M{I}_{n}),
\end{equation}
where $\otimes$ denotes the Kronecker product, $\M{I}_{k}$ is the $k\times k$ identity matrix, and $\M{F}_{p}$ is the $p\times p$ normalized DFT matrix with entries $(\M{F}_{p})_{ij} = \frac{1}{\sqrt{p}}\omega^{(i-1)(j-1)}$ for $\omega=e^{2 \pi \sqrt{-1}/p}$. Here, 
$\widehat{\M{A}}^{(i)}$ denotes the $i$-th frontal slice of $\widehat{\T{A}}$. Following the standard convention in the T-product literature (\cite{kilmer2011factorization,kilmer2013third}), the unitary similarity by $\M{F}_{p}$ produces diagonal blocks equal to the ordinary FFT coefficients of the frontal slices. The diagonal blocks $\widehat{\M{A}}^{(i)} \in \C^{m \times n}$ are therefore the matrices
\begin{equation*}
\widehat{\M{A}}^{(i)} = \sum_{j=1}^{p} \omega^{(i-1)(j-1)}\M{A}^{(j)}, \quad i=1,\dots,p,
\end{equation*}
with inverse transformation
\begin{equation} \label{eq:ifft}
\M{A}^{(j)} = \frac{1}{p}\sum_{i=1}^{p} \omega^{-(i-1)(j-1)}\widehat{\M{A}}^{(i)}, \quad j=1,\dots,p.
\end{equation}
\end{lemma}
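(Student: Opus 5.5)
The plan is to write $\bcirc(\T{A})$ as a sum of Kronecker products built from a single cyclic shift matrix, diagonalize that shift by $\M{F}_p$, and read off the diagonal blocks. With the standard convention, the first block column of $\bcirc(\T{A})$ is $(\M{A}^{(1)},\dots,\M{A}^{(p)})$, so block $(i,j)$ equals $\M{A}^{((i-j)\bmod p)+1}$. Let $\M{C}\in\R^{p\times p}$ be the cyclic down-shift permutation, $\M{C}\V{e}_k=\V{e}_{k+1}$ with indices mod $p$. Then
\begin{equation*}
\bcirc(\T{A})=\sum_{j=1}^{p}\M{C}^{\,j-1}\otimes \M{A}^{(j)} .
\end{equation*}
This is a one-line check: $\M{C}^{j-1}$ has ones exactly in the positions $(i,l)$ with $i-l\equiv j-1 \pmod p$.

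Next I diagonalize $\M{C}$ by the columns of $\M{F}_p^H$. Let $\V{f}_k$ be the $k$-th column of $\M{F}_p^H$, with entries $(\V{f}_k)_i=\omega^{-(i-1)(k-1)}/\sqrt{p}$. Then
\begin{equation*}
(\M{C}\V{f}_k)_i=(\V{f}_k)_{i-1}=\omega^{k-1}(\V{f}_k)_i .
\end{equation*}
Hence $\M{C}\M{F}_p^H=\M{F}_p^H\M{\Lambda}$ with $\M{\Lambda}=\diag(1,\omega,\dots,\omega^{p-1})$. Since $\M{F}_p$ is unitary, which follows from orthogonality of the $p$-th roots of unity, this gives $\M{F}_p\M{C}^{j-1}\M{F}_p^H=\M{\Lambda}^{j-1}$.

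The mixed-product rule for Kronecker products then yields
\begin{equation*}
(\M{F}_p\otimes\M{I}_m)\,\bcirc(\T{A})\,(\M{F}_p^H\otimes\M{I}_n)=\sum_{j=1}^{p}\M{\Lambda}^{j-1}\otimes\M{A}^{(j)} .
\end{equation*}
The right-hand side is block diagonal, and its $i$-th diagonal block is $\sum_{j}\omega^{(i-1)(j-1)}\M{A}^{(j)}=\widehat{\M{A}}^{(i)}$. Multiplying on the left by $\M{F}_p^H\otimes\M{I}_m$ and on the right by $\M{F}_p\otimes\M{I}_n$, both unitary, gives \eqref{eqFourierBlockDiagMN}.

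For the inverse formula \eqref{eq:ifft}, I substitute the definition of $\widehat{\M{A}}^{(i)}$ into the right-hand side. I then use
\begin{equation*}
\sum_{i=1}^{p}\omega^{(i-1)(l-j)}=p\,\delta_{lj},
\end{equation*}
which is again orthogonality of roots of unity.

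The only delicate point is the sign convention for $\omega$. One must check that the shift direction built into $\bcirc$, together with the choice $\M{F}_p$ versus $\M{F}_p^H$ on each side of \eqref{eqFourierBlockDiagMN}, produces $\omega^{+(i-1)(j-1)}$ rather than its conjugate. The eigenvector computation above is exactly where this is settled. If the opposite block-circulant convention were used (first block row instead of first block column), the blocks would be reindexed as $i\mapsto p+2-i$, and the formula would still hold after that relabeling.
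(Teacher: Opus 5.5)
The paper gives no proof of this lemma. It quotes it as a standard fact from \cite{kilmer2011factorization}, so there is no internal argument to compare yours against.

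Your proof is correct, and it is the standard argument behind the cited result. Each step checks out:
the splitting $\bcirc(\T{A})=\sum_{j}\M{C}^{j-1}\otimes\M{A}^{(j)}$;
the eigenrelation $\M{C}\M{F}_p^H=\M{F}_p^H\diag(1,\omega,\dots,\omega^{p-1})$, including the wrap-around row $i=1$, which works because $\omega^{p}=1$;
the mixed-product step;
and the orthogonality relation used for the inverse formula.
Together they give exactly the blocks $\sum_j\omega^{(i-1)(j-1)}\M{A}^{(j)}$ with $\omega=e^{2\pi\sqrt{-1}/p}$, as stated.

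Compared with the citation, your derivation makes the sign convention explicit rather than leaving it to the reader. It also shows something about the statement's wording. In the more common FFT convention, with kernel $e^{-2\pi\sqrt{-1}(i-1)(j-1)/p}$ as in MATLAB's \texttt{fft}, the blocks in \eqref{eqFourierBlockDiagMN} are the FFT coefficients relabeled by $i\mapsto p+2-i$ (indices mod $p$). So the phrase ``ordinary FFT coefficients'' holds only up to that permutation of slices, unless the DFT is taken with the $e^{+2\pi\sqrt{-1}/p}$ kernel.

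This relabeling is harmless for the rest of the paper. Every structural condition used later is imposed on all Fourier-domain slices at once. The conjugate-symmetry condition of Proposition~\ref{prop:real_recovery} is also invariant under $i\mapsto p+2-i$.
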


Lemma~\ref{LemmaFourierBlockDiag} provides an equivalent, computationally efficient interpretation of the T-product. Using the block-diagonalization in Lemma~\ref{LemmaFourierBlockDiag}, the computation of $\T{C} = \T{A} * \T{B}$ is performed by first converting the spatial-domain tensors $\T{A}$ and $\T{B}$ into the Fourier-domain tensors $\widehat{\T{A}}$ and $\widehat{\T{B}}$. This step is efficiently implemented by applying the Fast Fourier Transform (FFT) along each tube of $\T{A}$ and $\T{B}$. Next, in the Fourier domain, the T-product becomes a series of independent matrix products on each frontal slice:
\begin{equation*}
\widehat{\M{C}}^{(i)} = \widehat{\M{A}}^{(i)}\widehat{\M{B}}^{(i)}, \quad \text{for } i=1, \dots, p.
\end{equation*}
Finally, $\widehat{\T{C}}$ is converted back to the spatial domain by the inverse DFT described in Lemma \ref{LemmaFourierBlockDiag},
which is efficiently implemented using the inverse Fast Fourier Transform (IFFT).

We now define other essential tensor operations  based on~\cite{ju2025geometric}. 
\begin{definition}
The \emph{identity tensor} $\T{I}_{n,p} \in \R^{n \times n \times p}$ is the tensor whose first frontal slice is the identity matrix $\M{I}_n$ and whose other frontal slices are zero matrices, such that $\T{A} * \T{I}_{n,p} = \T{A}$ for any compatible tensor $\T{A}$.
The \emph{zero tensor} $\T{O} \in \R^{n\times m\times p}$ is defined as the tensor whose frontal slices are all the zero matrix.
\end{definition}

\begin{definition}
For a tensor $\T{A}\in \C^{m\times n\times p}$, let the \emph{T-conjugate transpose} of $\T{A}$ be denoted by $\T{A}^H$.
Then, each $k$-th frontal slice of $\T{A}^H$, denoted by $(\M{A}^H)^{(k)}$, is defined as follows
$$
(\M{A}^H)^{(k)} =
\begin{cases}
(\M{A}^{(1)})^H & \text{if } k=1 \\
(\M{A}^{(p-k+2)})^H & \text{if } k \ge 2
\end{cases}
$$
This definition ensures that $\bcirc(\T{A}^H) = (\bcirc(\T{A}))^H$.
\end{definition}

\begin{definition}
A tensor $\T{A} \in \C^{n \times n \times p}$ satisfying $\T{A}^H = \T{A}$, $\T{A}$ is called \emph{T-Hermitian}.
Equivalently, $\T{A}$ is T-Hermitian if and only if its block circulant matricization, $\bcirc(\T{A})$, is a Hermitian matrix.
\end{definition}

\begin{definition}
A tensor $\T{U} \in \C^{n \times n \times p}$ is \emph{T-unitary} if it satisfies:
$$ \T{U}^H * \T{U} = \T{U} * \T{U}^H = \T{I}_{n,p}. $$
\end{definition}

\begin{definition}
A T-Hermitian tensor $\T{M} \in \C^{n \times n \times p}$ $\mathcal{M} \in \mathbb{C}^{n \times n \times p}$ is strictly \emph{T-positive-definite} if $\tr(\T{X}^H * \T{M} * \T{X}) > 0$ for every nonzero tensor $\T{X} \in \C^{n \times 1 \times p}$.
Here, the trace operator is $\tr(\T{A}) = \tr(\bcirc(\T{A}))$ and the product $\T{X}^H * \T{M} * \T{X}$ yields a $1 \times 1 \times p$ tubal scalar.
\end{definition}
Equivalently, for every nonzero $\T{X}$, the Frobenius inner product $\langle \T{X}, \T{M} * \T{X} \rangle$ is positive. 
Furthermore, a T-Hermitian tensor is T-positive-definite if and only if its block circulant matricization $\bcirc(\T{M})$ is Hermitian positive-definite, or equivalently, if all of its Fourier-domain slices $\widehat{\M{M}}^{(i)}$ are Hermitian positive-definite matrices.

\section{Main Theoretical Framework}
\label{sec:framework}

We first recall the structured matrix classes that will be lifted to the T-product setting.
Let $\M{J} \in \R^{2n \times 2n}$ be the standard symplectic unit matrix
$\M{J} = \begin{bmatrix} \M{0} & \M{I}_{n} \\ -\M{I}_{n} & \M{0} \end{bmatrix}$.
A matrix $\M{H} \in \C^{2n \times 2n}$ is \emph{Hamiltonian} if $(\M{J}\M{H})^H = \M{J}\M{H}$. 
A matrix $\M{S} \in \C^{2n \times 2n}$ is \emph{symplectic} if $\M{S}^H \M{J} \M{S} = \M{J}$. 
We now define the tensor analogue of the matrix $\M{J}$.

\begin{definition}
\label{def:t_unit_J}
The \emph{T-symplectic unit tensor}, denoted $\T{J} \in \R^{2n \times 2n \times p}$, is the tensor whose every frontal slice in the Fourier domain is the standard symplectic unit matrix $\M{J}$:
$$
    \hat{\M{J}}^{(i)} = \M{J} = \begin{bmatrix} \M{0} & \M{I}_{n} \\ -\M{I}_{n} & \M{0} \end{bmatrix} \quad \text{for } i = 1, \dots, p.
$$
\end{definition}

\begin{remark}\label{rmk:J}
The structure of $\T{J}$ in the spatial domain follows directly from its definition in the Fourier domain. 
Applying the inverse DFT reveals that its first frontal slice $\M{J}^{(1)}$ is the matrix $\M{J}$, while all other frontal slices $\M{J}^{(k)}$ for $k > 1$ are zero matrices. 
This makes $\T{J}$ a sparse tensor in the spatial domain. In addition, since
$$
\widehat{\M{J}}^{(i)}\widehat{\M{J}}^{(i)} = \M{J}^2 = -\M{I}_{2n}
\quad \text{for each } i=1,\dots,p,
$$
we also have
$$
\T{J} * \T{J} = -\T{I}_{2n,p}.
$$
\end{remark}

With this foundational structure, we can now define T-Hamiltonian and T-symplectic tensors, and then introduce the main result of this paper.

\subsection{T-Hamiltonian Tensors}
The definition of a T-Hamiltonian tensor is a natural extension of its matrix counterpart, designed to be compatible with the algebraic structure of the T-product.
By enforcing the Hamiltonian property on each Fourier-domain slice, where the T-product behaves as conventional matrix multiplication (\cite{kilmer2011factorization}), we preserve the essential spectral characteristics.

\begin{definition}
\label{def:t_hamiltonian}
A tensor $\T{H} \in \C^{2n \times 2n \times p}$ is said to be \emph{T-Hamiltonian} if it satisfies the equation $(\T{J} * \T{H})^H = \T{J} * \T{H}$.
\end{definition}

This definition implies that the tensor product $\T{K} = \T{J} * \T{H}$ is T-Hermitian. 
The main consequence of this definition is revealed in the Fourier domain.

\begin{theorem}
\label{thm:hamiltonian_equiv}
A tensor $\T{H} \in \C^{2n \times 2n \times p}$ is T-Hamiltonian if and only if each of its frontal slices in the Fourier domain, $\widehat{\M{H}}^{(i)}$ for $i=1, \dots, p$, is a Hamiltonian matrix.
\end{theorem}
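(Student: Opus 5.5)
The plan is to move everything to the Fourier domain via Lemma~\ref{LemmaFourierBlockDiag} and reduce the tensor identity $(\T{J}*\T{H})^H=\T{J}*\T{H}$ to $p$ independent matrix identities. Write $\M{P}_m=\M{F}_p\otimes\M{I}_m$, which is unitary, and $\M{D}(\T{A})=\diag(\widehat{\M{A}}^{(1)},\dots,\widehat{\M{A}}^{(p)})$. Lemma~\ref{LemmaFourierBlockDiag} then reads $\bcirc(\T{A})=\M{P}_m^H\M{D}(\T{A})\M{P}_n$.

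Three consequences of this factorization are needed.

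\emph{First}, the map $\T{A}\mapsto\bcirc(\T{A})$ is injective, since the first block column of $\bcirc(\T{A})$ lists the frontal slices of $\T{A}$. Likewise, $\T{A}\mapsto\M{D}(\T{A})$ is injective, because the DFT in \eqref{eq:ifft} is invertible. Hence two tensors are equal if and only if all their Fourier slices coincide.

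\emph{Second}, from $\bcirc(\T{A}*\T{B})=\bcirc(\T{A})\bcirc(\T{B})$ we get $\M{D}(\T{A}*\T{B})=\M{D}(\T{A})\M{D}(\T{B})$. Concretely, $\widehat{(\T{A}*\T{B})}^{(i)}=\widehat{\M{A}}^{(i)}\widehat{\M{B}}^{(i)}$, as already recorded after the lemma.

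\emph{Third}, the T-conjugate transpose acts slice-wise as the ordinary conjugate transpose. Using $\bcirc(\T{A}^H)=\bcirc(\T{A})^H$ (stated in the definition) and the unitarity of $\M{P}$,
\[
\M{P}_n^H\M{D}(\T{A}^H)\M{P}_m=\bcirc(\T{A}^H)=\bigl(\M{P}_m^H\M{D}(\T{A})\M{P}_n\bigr)^H=\M{P}_n^H\M{D}(\T{A})^H\M{P}_m .
\]
This gives $\M{D}(\T{A}^H)=\M{D}(\T{A})^H$, that is, $\widehat{(\M{A}^H)}^{(i)}=(\widehat{\M{A}}^{(i)})^H$ for every $i$. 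One could instead check this directly from the index reversal $k\mapsto p-k+2$ and $\bar\omega=\omega^{-1}$, which is a short computation; the factorization route avoids it.

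With these facts the theorem follows. Set $\T{K}=\T{J}*\T{H}$. By Definition~\ref{def:t_unit_J}, its Fourier slices are $\widehat{\M{K}}^{(i)}=\M{J}\widehat{\M{H}}^{(i)}$, and those of $\T{K}^H$ are $(\M{J}\widehat{\M{H}}^{(i)})^H$. By injectivity, $\T{K}^H=\T{K}$ holds if and only if $(\M{J}\widehat{\M{H}}^{(i)})^H=\M{J}\widehat{\M{H}}^{(i)}$ for all $i=1,\dots,p$. This is exactly the statement that each $\widehat{\M{H}}^{(i)}$ is Hamiltonian in the matrix sense. Both directions come out of the same chain of equivalences.

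The only step needing care is the third fact, the compatibility of T-conjugate transpose with slice-wise conjugate transposition. It relies on the normalization in Lemma~\ref{LemmaFourierBlockDiag} making $\M{P}_m$ unitary, so that taking the adjoint of the factorization reproduces a factorization of the same form. If any normalization mismatch appears, the fallback is the direct DFT computation $\sum_k\omega^{(i-1)(k-1)}(\M{A}^{(p-k+2)})^H=\bigl(\sum_j\omega^{(i-1)(j-1)}\M{A}^{(j)}\bigr)^H$, obtained by reindexing $j=p-k+2$ modulo $p$.
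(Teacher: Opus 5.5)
Your proof is correct and follows essentially the same route as the paper: both reduce $(\T{J}*\T{H})^H=\T{J}*\T{H}$ to Hermitian-ness of each $\M{J}\widehat{\M{H}}^{(i)}$ through the unitary block diagonalization of Lemma~\ref{LemmaFourierBlockDiag}. The only difference is packaging: the paper argues directly that $\bcirc(\T{J}*\T{H})$ is Hermitian exactly when its block-diagonal factor is, whereas you first isolate the general identity $\widehat{(\T{A}^H)}^{(i)}=(\widehat{\M{A}}^{(i)})^H$ and then use injectivity of the transform; the paper later uses that identity implicitly, e.g.\ in Theorem~\ref{thm:symplectic_equiv}.
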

\begin{proof}
Set
$\T{K} = \T{J} * \T{H}$.
By Definition~\ref{def:t_hamiltonian}, the tensor $\T{H}$ is T-Hamiltonian if and only if $\T{K}$ is T-Hermitian.

We now map to the Fourier domain. For each $i=1,\dots,p$, the $i$-th frontal slice of $\widehat{\T{K}}$ is
$$
\widehat{\M{K}}^{(i)} = \widehat{\M{J}}^{(i)}\widehat{\M{H}}^{(i)}.
$$
By Definition~\ref{def:t_unit_J}, we have $\widehat{\M{J}}^{(i)} = \M{J}$ for every $i$, and therefore
$$
\widehat{\M{K}}^{(i)} = \M{J}\widehat{\M{H}}^{(i)}.
$$

Next we characterize what $\T{K}$ being T-Hermitian means in terms of its Fourier-domain slices. By the definition of a T-Hermitian tensor, $\T{K}$ is T-Hermitian if and only if $\bcirc(\T{K})$ is Hermitian. Lemma~\ref{LemmaFourierBlockDiag} gives
$$
\bcirc(\T{K}) = (\M{F}_{p}^H \otimes \M{I}_{2n})
\diag(\widehat{\M{K}}^{(1)},\dots,\widehat{\M{K}}^{(p)})
(\M{F}_{p} \otimes \M{I}_{2n}).
$$
Since $\M{F}_{p}$ is unitary, the matrix $\bcirc(\T{K})$ is Hermitian if and only if the block diagonal matrix
$$
\diag(\widehat{\M{K}}^{(1)},\dots,\widehat{\M{K}}^{(p)})
$$
is Hermitian. The latter condition holds if and only if each diagonal block is Hermitian, that is,
$$
(\widehat{\M{K}}^{(i)})^H = \widehat{\M{K}}^{(i)}
\quad \text{for every } i=1,\dots,p.
$$

Combining these identities, we conclude that $\T{H}$ is T-Hamiltonian if and only if
$$
(\M{J}\widehat{\M{H}}^{(i)})^H = \M{J}\widehat{\M{H}}^{(i)}
\quad \text{for every } i=1,\dots,p.
$$
By the definition of a Hamiltonian matrix, this is equivalent to saying that each matrix $\widehat{\M{H}}^{(i)}$ is Hamiltonian. This proves the assertion.
\end{proof}

This theorem provides a practical way to construct or verify T-Hamiltonian tensors. 
A common representation for Hamiltonian matrices involves a specific block structure~\cite{benner2001symplectic}.

\begin{proposition}
\label{prop:T-Hamiltonian}
Let $\T{H} \in \C^{2n \times 2n \times p}$. 
$\T{H}$ is T-Hamiltonian if and only if each Fourier-domain slice $\widehat{\M{H}}^{(i)}$ can be written in the block form $\widehat{\M{H}}^{(i)} = \begin{bmatrix} \M{A}_i & \M{B}_i \\ \M{C}_i & -\M{A}_i^H \end{bmatrix}$, where $\M{A}_i, \M{B}_i, \M{C}_i \in \C^{n \times n}$ and the matrices $\M{B}_i$ and $\M{C}_i$ are Hermitian for each $i=1, \dots, p$.
\end{proposition}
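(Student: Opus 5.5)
By Theorem~\ref{thm:hamiltonian_equiv}, $\T{H}$ is T-Hamiltonian if and only if every Fourier-domain slice $\widehat{\M{H}}^{(i)}$ is a Hamiltonian matrix. The tensor statement therefore reduces to a purely matricial one. We must show that a matrix $\M{H}\in\C^{2n\times 2n}$ satisfies $(\M{J}\M{H})^H=\M{J}\M{H}$ if and only if it has the block form $\begin{bmatrix}\M{A} & \M{B}\\ \M{C} & -\M{A}^H\end{bmatrix}$ with $\M{B}$ and $\M{C}$ Hermitian. Applying this slice by slice with $\M{H}=\widehat{\M{H}}^{(i)}$ gives the proposition. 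No T-product machinery is needed beyond Theorem~\ref{thm:hamiltonian_equiv}.

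For the matrix equivalence, partition an arbitrary $\M{H}$ into $n\times n$ blocks as $\M{H}=\begin{bmatrix}\M{H}_{11} & \M{H}_{12}\\ \M{H}_{21} & \M{H}_{22}\end{bmatrix}$. A direct block multiplication with $\M{J}$ gives
$$
\M{J}\M{H}=\begin{bmatrix}\M{H}_{21} & \M{H}_{22}\\ -\M{H}_{11} & -\M{H}_{12}\end{bmatrix},
\qquad
(\M{J}\M{H})^H=\begin{bmatrix}\M{H}_{21}^H & -\M{H}_{11}^H\\ \M{H}_{22}^H & -\M{H}_{12}^H\end{bmatrix}.
$$
Equating blocks, the Hamiltonian condition is equivalent to three requirements:
\begin{itemize}
\item $\M{H}_{21}^H=\M{H}_{21}$, from the $(1,1)$ block;
\item $\M{H}_{12}^H=\M{H}_{12}$, from the $(2,2)$ block;
\item $\M{H}_{22}=-\M{H}_{11}^H$, from the $(1,2)$ block. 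The $(2,1)$ block gives the same relation after taking conjugate transposes.
\end{itemize}
Setting $\M{A}=\M{H}_{11}$, $\M{B}=\M{H}_{12}$ and $\M{C}=\M{H}_{21}$ yields exactly the stated block form. Conversely, if $\M{H}$ has that form with $\M{B}$ and $\M{C}$ Hermitian, the same computation shows $(\M{J}\M{H})^H=\M{J}\M{H}$. Both directions follow from this one identity, since every step is an equivalence.

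There is no real obstacle. The only points needing care are the signs in $\M{J}\M{H}$, and the observation that the off-diagonal block conditions are redundant, each being the conjugate transpose of the other. I would also note that $\M{A}_i$, $\M{B}_i$ and $\M{C}_i$ are allowed to vary freely with $i$. No conjugate-symmetry constraint is imposed across slices, because $\T{H}$ is permitted to be complex.
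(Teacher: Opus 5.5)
Your proposal is correct and follows essentially the same route as the paper: reduce to the slice-wise matrix condition via Theorem~\ref{thm:hamiltonian_equiv}, compute $\M{J}\widehat{\M{H}}^{(i)}$ and its conjugate transpose blockwise, and equate blocks. The only difference is presentational: the paper writes out the converse computation separately, while you note that every step is already an equivalence, which is equally valid.
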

\begin{proof}
Assume first that $\T{H}$ is T-Hamiltonian. By Theorem~\ref{thm:hamiltonian_equiv}, each matrix $\widehat{\M{H}}^{(i)}$ is Hamiltonian. Fix $i \in \{1,\dots,p\}$, and write
$$
\widehat{\M{H}}^{(i)} =
\begin{bmatrix}
\M{A}_i & \M{B}_i \\
\M{C}_i & \M{D}_i
\end{bmatrix},
$$
where $\M{A}_i,\M{B}_i,\M{C}_i,\M{D}_i \in \C^{n\times n}$. We compute
$$
\M{J}\widehat{\M{H}}^{(i)}
=
\begin{bmatrix}
\M{0} & \M{I}_n \\
-\M{I}_n & \M{0}
\end{bmatrix}
\begin{bmatrix}
\M{A}_i & \M{B}_i \\
\M{C}_i & \M{D}_i
\end{bmatrix}
=
\begin{bmatrix}
\M{C}_i & \M{D}_i \\
-\M{A}_i & -\M{B}_i
\end{bmatrix}.
$$
Since $\widehat{\M{H}}^{(i)}$ is Hamiltonian, the matrix $\M{J}\widehat{\M{H}}^{(i)}$ is Hermitian. Hence
$$
\left(
\begin{bmatrix}
\M{C}_i & \M{D}_i \\
-\M{A}_i & -\M{B}_i
\end{bmatrix}
\right)^H
=
\begin{bmatrix}
\M{C}_i & \M{D}_i \\
-\M{A}_i & -\M{B}_i
\end{bmatrix}.
$$
Computing the conjugate transpose on the left-hand side gives
$$
\begin{bmatrix}
\M{C}_i^H & -\M{A}_i^H \\
\M{D}_i^H & -\M{B}_i^H
\end{bmatrix}
=
\begin{bmatrix}
\M{C}_i & \M{D}_i \\
-\M{A}_i & -\M{B}_i
\end{bmatrix}.
$$
Equality of the corresponding blocks yields
$$
\M{C}_i^H = \M{C}_i, \qquad
-\M{A}_i^H = \M{D}_i, \qquad
\M{D}_i^H = -\M{A}_i, \qquad
\M{B}_i^H = \M{B}_i.
$$
The second identity gives $\M{D}_i = -\M{A}_i^H$, and the first and fourth identities show that $\M{C}_i$ and $\M{B}_i$ are Hermitian. Therefore
$$
\widehat{\M{H}}^{(i)} =
\begin{bmatrix}
\M{A}_i & \M{B}_i \\
\M{C}_i & -\M{A}_i^H
\end{bmatrix},
$$
with $\M{B}_i^H = \M{B}_i$ and $\M{C}_i^H = \M{C}_i$.

Conversely, assume that for each $i=1,\dots,p$ we have
$$
\widehat{\M{H}}^{(i)} =
\begin{bmatrix}
\M{A}_i & \M{B}_i \\
\M{C}_i & -\M{A}_i^H
\end{bmatrix},
$$
where $\M{B}_i^H = \M{B}_i$ and $\M{C}_i^H = \M{C}_i$. We compute
$$
\M{J}\widehat{\M{H}}^{(i)}
=
\begin{bmatrix}
\M{0} & \M{I}_n \\
-\M{I}_n & \M{0}
\end{bmatrix}
\begin{bmatrix}
\M{A}_i & \M{B}_i \\
\M{C}_i & -\M{A}_i^H
\end{bmatrix}
=
\begin{bmatrix}
\M{C}_i & -\M{A}_i^H \\
-\M{A}_i & -\M{B}_i
\end{bmatrix}.
$$
Taking the conjugate transpose gives
$$
\left(
\begin{bmatrix}
\M{C}_i & -\M{A}_i^H \\
-\M{A}_i & -\M{B}_i
\end{bmatrix}
\right)^H
=
\begin{bmatrix}
\M{C}_i^H & (-\M{A}_i)^H \\
(-\M{A}_i^H)^H & (-\M{B}_i)^H
\end{bmatrix}
=
\begin{bmatrix}
\M{C}_i^H & -\M{A}_i^H \\
-\M{A}_i & -\M{B}_i^H
\end{bmatrix}.
$$
Using $\M{C}_i^H = \M{C}_i$ and $\M{B}_i^H = \M{B}_i$, we obtain
$$
(\M{J}\widehat{\M{H}}^{(i)})^H
=
\begin{bmatrix}
\M{C}_i & -\M{A}_i^H \\
-\M{A}_i & -\M{B}_i
\end{bmatrix}
=
\M{J}\widehat{\M{H}}^{(i)}.
$$
Hence $\widehat{\M{H}}^{(i)}$ is Hamiltonian for every $i$. Theorem~\ref{thm:hamiltonian_equiv} now implies that $\T{H}$ is T-Hamiltonian.
\end{proof}

The T-product framework allows for a natural extension of eigenvalue problems to tensors.
The study of tensor eigenvalues is an active field, with foundational work on variational approaches and spectral properties established for various tensor formats (\cite{qi2005eigenvalues, lim2005singular}). 
Tensor spectral theory has developed into a broad field that deals with various forms of tensor eigenvalue definitions and their properties (\cite{qi2017tensor}).
In this paper, we adopt the T-spectrum induced by the T-Jordan canonical form. By the T-Jordan decomposition theorem introduced in~\cite{miao2021t}, every tensor $\T{A} \in \C^{n \times n \times p}$ admits a factorization
\begin{equation*}
\T{A} = \T{P}^{-1} * \T{C} * \T{P},
\end{equation*}
where $\T{P}$ is invertible and $\T{C}$ is an F-upper-bi-diagonal tensor; equivalently, each Fourier-domain slice $\widehat{\M{C}}^{(i)}$ is in Jordan canonical form. This leads to the following two possible definitions for T-eigenvalue.

\begin{definition}
\label{def:jordan}
A \emph{T-eigenvalue} of $\T{A}$ is any diagonal entry of the Jordan matrices $\widehat{\M{C}}^{(i)}$, $i=1,\dots,p$.
\end{definition}

\begin{definition}
\label{def:eigen_eq}
A scalar $\lambda \in \C$ is a \emph{T-eigenvalue} of a tensor $\T{A} \in \C^{n \times n \times p}$ if there exists a nonzero tensor $\T{X} \in \C^{n \times 1 \times p}$ (a T-eigenvector) such that
$$ \T{A} * \T{X} = \lambda \T{X}. $$
\end{definition}

\begin{proposition}
\label{prop:eigen_equiv}
Definition~\ref{def:jordan} and Definition~\ref{def:eigen_eq} are mathematically equivalent.
\end{proposition}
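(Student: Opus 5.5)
Both definitions will be reduced to the same set: the union over $i=1,\dots,p$ of the matrix spectra $\sigma(\widehat{\M{A}}^{(i)})$. The proof then consists of two separate identifications of this set.

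\emph{Definition~\ref{def:jordan}.} Write $\T{A}=\T{P}^{-1}*\T{C}*\T{P}$. In the Fourier domain this becomes $\widehat{\M{A}}^{(i)}=(\widehat{\M{P}}^{(i)})^{-1}\widehat{\M{C}}^{(i)}\widehat{\M{P}}^{(i)}$ for every $i$. Invertibility of $\T{P}$ makes each $\widehat{\M{P}}^{(i)}$ invertible, since $\bcirc(\T{P})$ is block-diagonalized by Lemma~\ref{LemmaFourierBlockDiag}. So each $\widehat{\M{A}}^{(i)}$ is similar to the Jordan matrix $\widehat{\M{C}}^{(i)}$, whose diagonal entries are exactly the eigenvalues of $\widehat{\M{A}}^{(i)}$. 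Hence the T-eigenvalues of Definition~\ref{def:jordan} form the set $\bigcup_i \sigma(\widehat{\M{A}}^{(i)})$. As a by-product, this set does not depend on the chosen T-Jordan factorization.

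\emph{Definition~\ref{def:eigen_eq}.} The key point is to interpret $\lambda\T{X}$ correctly. Here $\lambda\in\C$ is an ordinary scalar multiplying every entry of $\T{X}$, so the DFT along the tubes is linear and gives $\widehat{\lambda\T{X}}=\lambda\widehat{\T{X}}$. Because the T-product is slice-wise in the Fourier domain, $\T{A}*\T{X}=\lambda\T{X}$ is equivalent to
$$\widehat{\M{A}}^{(i)}\widehat{\M{x}}^{(i)}=\lambda\widehat{\M{x}}^{(i)},\qquad i=1,\dots,p,$$
where $\widehat{\M{x}}^{(i)}\in\C^{n}$ is the $i$-th Fourier slice of $\T{X}$. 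The DFT is invertible, so $\T{X}\neq\T{O}$ holds if and only if some $\widehat{\M{x}}^{(i)}\neq \M{0}$.

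\emph{First inclusion.} Suppose $\lambda$ satisfies Definition~\ref{def:eigen_eq}, and pick an index $i_0$ with $\widehat{\M{x}}^{(i_0)}\neq\M{0}$. Then $\widehat{\M{x}}^{(i_0)}$ is an eigenvector of $\widehat{\M{A}}^{(i_0)}$, so $\lambda\in\sigma(\widehat{\M{A}}^{(i_0)})$.

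\emph{Second inclusion.} Let $\lambda\in\sigma(\widehat{\M{A}}^{(i_0)})$ with eigenvector $\M{v}\neq\M{0}$. Define $\widehat{\T{X}}$ by $\widehat{\M{x}}^{(i_0)}=\M{v}$ and $\widehat{\M{x}}^{(i)}=\M{0}$ for $i\ne i_0$, and let $\T{X}$ be its inverse DFT via \eqref{eq:ifft}. This $\T{X}$ is a nonzero tensor in $\C^{n\times1\times p}$, and the slice equations hold trivially: at $i_0$ by choice of $\M{v}$, and elsewhere as $\M{0}=\M{0}$. Therefore $\T{A}*\T{X}=\lambda\T{X}$. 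Combining the two inclusions with the first step shows that both definitions produce the set $\bigcup_i\sigma(\widehat{\M{A}}^{(i)})$.

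\emph{Main obstacle.} The delicate point is the second inclusion. The constructed $\T{X}$ is generally complex-valued in the spatial domain even when $\T{A}$ is real. The equivalence therefore depends on eigenvectors being allowed to lie in $\C^{n\times1\times p}$, as Definition~\ref{def:eigen_eq} permits, and on $\lambda$ being a genuine scalar rather than a tubal scalar. I will state both conventions explicitly. Beyond this, one must also check that similarity by $\widehat{\M{P}}^{(i)}$ preserves spectra and that the Jordan diagonal equals the spectrum, but both are standard matrix facts.
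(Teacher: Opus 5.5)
Your proposal is correct and follows essentially the same route as the paper: in the forward direction you pass to the Fourier domain and extract a nonzero slice $\widehat{\M{x}}^{(k)}$, and in the converse you build $\widehat{\T{X}}$ supported on a single slice that carries an eigenvector of $\widehat{\M{A}}^{(k)}$. Your explicit derivation of $\widehat{\M{A}}^{(i)}=(\widehat{\M{P}}^{(i)})^{-1}\widehat{\M{C}}^{(i)}\widehat{\M{P}}^{(i)}$ from the T-Jordan factorization is a small improvement, since it justifies the paper's appeal to ``ordinary Jordan theory'' (namely that $\widehat{\M{C}}^{(i)}$ really is the Jordan form of $\widehat{\M{A}}^{(i)}$) and shows the resulting set does not depend on the chosen factorization.
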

\begin{proof}
Assume first that $\lambda$ is a T-eigenvalue in the sense of Definition~\ref{def:eigen_eq}. Then there exists a nonzero tensor $\T{X} \in \C^{n\times 1\times p}$ such that
$$
\T{A} * \T{X} = \lambda \T{X}.
$$
After mapping to the Fourier domain and using the slice-wise form of the T-product, we obtain
$$
\widehat{\M{A}}^{(i)}\widehat{\V{x}}^{(i)} = \lambda \widehat{\V{x}}^{(i)}
\quad \text{for every } i=1,\dots,p,
$$
where $\widehat{\V{x}}^{(i)}$ denotes the $i$-th frontal slice of $\widehat{\T{X}}$. Since $\T{X}\neq \T{O}$, the tensor $\widehat{\T{X}}$ is also nonzero. Hence, there exists at least one index $k$ such that $\widehat{\V{x}}^{(k)} \neq \V{0}$. For this index $k$ we have
$$
\widehat{\M{A}}^{(k)}\widehat{\V{x}}^{(k)} = \lambda \widehat{\V{x}}^{(k)},
$$
so $\lambda$ is an eigenvalue of the matrix $\widehat{\M{A}}^{(k)}$. By the ordinary Jordan theory for matrices, the eigenvalues of $\widehat{\M{A}}^{(k)}$ are exactly the diagonal entries of its Jordan canonical form $\widehat{\M{C}}^{(k)}$. Therefore $\lambda$ appears among the diagonal entries of one of the matrices $\widehat{\M{C}}^{(i)}$, and hence $\lambda$ is a T-eigenvalue in the sense of Definition~\ref{def:jordan}.

Conversely, assume that $\lambda$ is a T-eigenvalue in the sense of Definition~\ref{def:jordan}. Then there exists an index $k \in \{1,\dots,p\}$ such that $\lambda$ is a diagonal entry of $\widehat{\M{C}}^{(k)}$. Again by ordinary Jordan theory for matrices, $\lambda$ is an eigenvalue of $\widehat{\M{A}}^{(k)}$. Hence there exists a nonzero vector $\V{v} \in \C^n$ such that
$$
\widehat{\M{A}}^{(k)}\V{v} = \lambda \V{v}.
$$
We define a tensor $\widehat{\T{X}} \in \C^{n\times 1\times p}$ by prescribing its frontal slices as
$$
\widehat{\V{x}}^{(j)} =
\begin{cases}
\V{v}, & j=k,\\
\V{0}, & j\neq k.
\end{cases}
$$
The tensor $\widehat{\T{X}}$ is nonzero because $\widehat{\V{x}}^{(k)} = \V{v}\neq \V{0}$. For $j\neq k$ we have
$$
\widehat{\M{A}}^{(j)}\widehat{\V{x}}^{(j)}
=
\widehat{\M{A}}^{(j)}\V{0}
=
\V{0}
=
\lambda \V{0}
=
\lambda \widehat{\V{x}}^{(j)},
$$
and for $j=k$ we have
$$
\widehat{\M{A}}^{(k)}\widehat{\V{x}}^{(k)}
=
\widehat{\M{A}}^{(k)}\V{v}
=
\lambda \V{v}
=
\lambda \widehat{\V{x}}^{(k)}.
$$
Thus
$$
\widehat{\M{A}}^{(j)}\widehat{\V{x}}^{(j)} = \lambda \widehat{\V{x}}^{(j)}
\quad \text{for every } j=1,\dots,p.
$$
Applying the inverse DFT to map back to the spatial domain, we obtain
$$
\T{A} * \T{X} = \lambda \T{X},
$$
where $\T{X}$ is the result of applying slice-wise the inverse transform of $\widehat{\T{X}}$ as in Lemma \ref{LemmaFourierBlockDiag}. 
Since $\widehat{\T{X}}\neq \T{O}$, also $\T{X}\neq \T{O}$. Therefore $\lambda$ is a T-eigenvalue in the sense of Definition~\ref{def:eigen_eq}.
\end{proof}

The T-eigenvalues of T-Hamiltonian tensors satisfy the following symmetry.

\begin{theorem}
\label{thm:spectral_symmetry}
Let $\T{H} \in \C^{2n \times 2n \times p}$ be a T-Hamiltonian tensor. 
If $\lambda$ is a T-eigenvalue of $\T{H}$, then $-\overline{\lambda}$ is also a T-eigenvalue of $\T{H}$.
\end{theorem}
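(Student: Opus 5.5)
The plan is to reduce the statement to the classical spectral symmetry of a single Hamiltonian matrix, using the slice-wise characterizations already established.

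First, suppose $\lambda$ is a T-eigenvalue of $\T{H}$. By Proposition~\ref{prop:eigen_equiv} we may use either definition. The proof of that proposition shows that $\lambda$ is an eigenvalue of some Fourier-domain slice $\widehat{\M{H}}^{(k)}$. By Theorem~\ref{thm:hamiltonian_equiv}, this slice is a Hamiltonian matrix, so $\M{J}\widehat{\M{H}}^{(k)}$ is Hermitian.

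The key matrix step is to show that the spectrum of a Hamiltonian matrix $\M{H}$ is closed under $\lambda\mapsto-\overline{\lambda}$. From $(\M{J}\M{H})^H=\M{J}\M{H}$ we get $\M{H}^H\M{J}^H=\M{J}\M{H}$. Since $\M{J}^H=\M{J}^T=-\M{J}$ and $\M{J}^{-1}=-\M{J}$, this rearranges to
\begin{equation*}
\M{H}^H = -\M{J}\M{H}\M{J}^{-1}.
\end{equation*}
Hence $\M{H}^H$ is similar to $-\M{H}$, and so the spectrum of $\M{H}^H$ equals that of $-\M{H}$. The spectrum of $\M{H}^H$ is the complex conjugate of the spectrum of $\M{H}$. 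Therefore, if $\lambda$ is an eigenvalue of $\M{H}$, then $\overline{\lambda}$ is an eigenvalue of $-\M{H}$, which means $-\overline{\lambda}$ is an eigenvalue of $\M{H}$.

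An equivalent, more explicit route: take a left eigenvector $\V{w}$ with $\V{w}^H\M{H}=\lambda\V{w}^H$. Setting $\V{v}=\M{J}^{-1}\V{w}$ and conjugate-transposing gives $\M{H}\V{v}=-\overline{\lambda}\V{v}$.

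Applying this to $\M{H}=\widehat{\M{H}}^{(k)}$ shows that $-\overline{\lambda}$ is an eigenvalue of $\widehat{\M{H}}^{(k)}$, with some eigenvector $\V{v}\neq\V{0}$. Then, exactly as in the converse half of the proof of Proposition~\ref{prop:eigen_equiv}, we build $\widehat{\T{X}}$ with $k$-th slice equal to $\V{v}$ and all other slices zero. Its inverse DFT $\T{X}$ satisfies $\T{H}*\T{X}=-\overline{\lambda}\T{X}$. Equivalently, $-\overline{\lambda}$ is a diagonal entry of the Jordan form $\widehat{\M{C}}^{(k)}$.

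The only nontrivial point is the similarity identity $\M{H}^H=-\M{J}\M{H}\M{J}^{-1}$. Its sign bookkeeping depends on $\M{J}^H=-\M{J}=\M{J}^{-1}$, and that should be checked carefully. Everything else reuses earlier results directly.
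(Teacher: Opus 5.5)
Your proposal is correct and follows essentially the same route as the paper's proof: you reduce to a Hamiltonian Fourier slice via Proposition~\ref{prop:eigen_equiv} and Theorem~\ref{thm:hamiltonian_equiv}, use the similarity $\M{H}^H=-\M{J}\M{H}\M{J}^{-1}$ together with $\operatorname{spec}(\M{H}^H)=\overline{\operatorname{spec}(\M{H})}$, and lift back to a T-eigenvalue. Your similarity is the paper's $\M{H}^H=\M{J}\M{H}\M{J}$ (since $\M{J}^{-1}=-\M{J}$), and your explicit construction of $\widehat{\T{X}}$ and the left-eigenvector variant just unpack the same steps.
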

\begin{proof}
Let $\lambda$ be a T-eigenvalue of $\T{H}$. By Proposition~\ref{prop:eigen_equiv}, there exists an index $i \in \{1,\dots,p\}$ such that $\lambda$ is an eigenvalue of the matrix $\widehat{\M{H}}^{(i)}$. Set
$\M{H}_i = \widehat{\M{H}}^{(i)}$.
Since $\T{H}$ is T-Hamiltonian, Theorem~\ref{thm:hamiltonian_equiv} shows that $\M{H}_i$ is a Hamiltonian matrix, so
$$
(\M{J}\M{H}_i)^H = \M{J}\M{H}_i.
$$
Because $\M{J}^H = -\M{J}$ and $\M{J}^{-1} = -\M{J}$, this identity implies
$$
\M{H}_i^H \M{J}^H = \M{J}\M{H}_i
\quad \Longrightarrow \quad
-\M{H}_i^H \M{J} = \M{J}\M{H}_i,
$$
and, hence,
$$
\M{H}_i^H = \M{J}\M{H}_i \M{J}.
$$
Therefore,
$$
\M{J}\M{H}_i \M{J}^{-1}
=
\M{J}\M{H}_i(-\M{J})
=
-\M{J}\M{H}_i\M{J}
=
-\M{H}_i^H.
$$
Thus $\M{H}_i$ and $-\M{H}_i^H$ are similar, so they have the same spectrum
$$
\operatorname{spec}(\M{H}_i)=\operatorname{spec}(-\M{H}_i^H).
$$
Since $\operatorname{spec}(\M{H}_i^H)=\overline{\operatorname{spec}(\M{H}_i)}$, we obtain
$$
\operatorname{spec}(\M{H}_i)
=
-\overline{\operatorname{spec}(\M{H}_i)}.
$$
Because $\lambda \in \operatorname{spec}(\M{H}_i)$, it follows that $-\overline{\lambda} \in \operatorname{spec}(\M{H}_i)$. Applying Proposition~\ref{prop:eigen_equiv} once more, we conclude that $-\overline{\lambda}$ is a T-eigenvalue of $\T{H}$.
\end{proof}

\subsection{T-Symplectic Tensors}
Analogous to the T-Hamiltonian case, a T-symplectic tensor is defined to ensure that in the Fourier domain, the frontal slices are symplectic in the matrix sense.
This preserves properties
inherent to symplectic structures.

\begin{definition}
\label{def:t_symplectic}
A tensor $\T{S} \in \C^{2n \times 2n \times p}$ is said to be \emph{T-symplectic} if it satisfies the equation $\T{S}^H * \T{J} * \T{S} = \T{J}$.
\end{definition}

Moreover, from the previous definition we have the following results in the Fourier domain.

\begin{theorem}
\label{thm:symplectic_equiv}
A tensor $\T{S} \in \C^{2n \times 2n \times p}$ is T-symplectic if and only if each of its frontal slices in the Fourier domain $\widehat{\M{S}}^{(i)}$, for $i=1, \dots, p$, is a symplectic matrix.
\end{theorem}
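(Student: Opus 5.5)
The proof will follow the pattern of Theorem~\ref{thm:hamiltonian_equiv}: translate the defining identity $\T{S}^H * \T{J} * \T{S} = \T{J}$ into the Fourier domain and read it off slice by slice. The one ingredient not already used there is how the T-conjugate transpose acts on Fourier slices. I will show that $\widehat{(\M{S}^H)}^{(i)} = (\widehat{\M{S}}^{(i)})^H$ for every $i$.

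To prove this, I will use $\bcirc(\T{S}^H) = (\bcirc(\T{S}))^H$ together with Lemma~\ref{LemmaFourierBlockDiag}. Taking the conjugate transpose of the factorization of $\bcirc(\T{S})$ gives
\[
\bcirc(\T{S})^H = (\M{F}_p^H\otimes \M{I}_{2n})\,\diag\bigl((\widehat{\M{S}}^{(1)})^H,\dots,(\widehat{\M{S}}^{(p)})^H\bigr)\,(\M{F}_p\otimes \M{I}_{2n}).
\]
Since $\M{F}_p\otimes\M{I}_{2n}$ is unitary, the block-diagonal factor in such a factorization is uniquely determined by $\bcirc(\T{S}^H)$. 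Comparing with the factorization of $\bcirc(\T{S}^H)$ therefore identifies the Fourier slices of $\T{S}^H$ as $(\widehat{\M{S}}^{(i)})^H$. Alternatively, this can be checked directly from the DFT formula and the index reversal $k\mapsto p-k+2$, using $\omega^{-(i-1)(j-1)} = \overline{\omega^{(i-1)(j-1)}}$.

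Next, because the T-product is slice-wise multiplication in the Fourier domain, the $i$-th Fourier slice of $\T{S}^H*\T{J}*\T{S}$ is $(\widehat{\M{S}}^{(i)})^H \M{J}\, \widehat{\M{S}}^{(i)}$, where $\widehat{\M{J}}^{(i)}=\M{J}$ by Definition~\ref{def:t_unit_J}. The DFT along the third mode is a bijection, so two tensors coincide if and only if all their Fourier slices coincide. Hence $\T{S}^H*\T{J}*\T{S}=\T{J}$ holds if and only if $(\widehat{\M{S}}^{(i)})^H\M{J}\widehat{\M{S}}^{(i)} = \M{J}$ for all $i$, which is exactly the statement that each slice is symplectic. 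The same chain of equivalences gives both directions at once.

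The only real obstacle is the transpose identity. It requires some care because the paper's normalization pairs a unitary $\M{F}_p$ with unnormalized FFT coefficients. The argument via uniqueness of the block-diagonal factor avoids tracking these constants, so I plan to use it as the main argument and the direct index computation as a check.
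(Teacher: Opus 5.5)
Your proposal is correct and follows the same route as the paper: map $\T{S}^H*\T{J}*\T{S}=\T{J}$ to the Fourier domain, compare slices using $\widehat{\M{J}}^{(i)}=\M{J}$, and use bijectivity of the DFT to get both directions. Your explicit check that the Fourier slices of $\T{S}^H$ are $(\widehat{\M{S}}^{(i)})^H$, via uniqueness of the block-diagonal factor under the unitary $\M{F}_p\otimes\M{I}_{2n}$, supplies a step the paper uses without comment.
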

\begin{proof}
Assume first that $\T{S}$ is T-symplectic. By Definition~\ref{def:t_symplectic},
$$
\T{S}^H * \T{J} * \T{S} = \T{J}.
$$
Mapping to the Fourier domain and using the slice-wise form of the T-product, we obtain
$$
(\widehat{\M{S}}^{(i)})^H \widehat{\M{J}}^{(i)} \widehat{\M{S}}^{(i)} = \widehat{\M{J}}^{(i)}
\quad \text{for every } i=1,\dots,p.
$$
Since $\widehat{\M{J}}^{(i)} = \M{J}$ for all $i$, the preceding identity becomes
$$
(\widehat{\M{S}}^{(i)})^H \M{J} \widehat{\M{S}}^{(i)} = \M{J}
\quad \text{for } i=1,\dots,p.
$$
Therefore each $\widehat{\M{S}}^{(i)}$ is a symplectic matrix.

Conversely, assume that each matrix $\widehat{\M{S}}^{(i)}$ is symplectic. Then,
$$
(\widehat{\M{S}}^{(i)})^H \M{J} \widehat{\M{S}}^{(i)} = \M{J}
\quad \text{for } i=1,\dots,p.
$$
Using again the identity $\widehat{\M{J}}^{(i)} = \M{J}$, we rewrite this as
$$
(\widehat{\M{S}}^{(i)})^H \widehat{\M{J}}^{(i)} \widehat{\M{S}}^{(i)} = \widehat{\M{J}}^{(i)}
\quad \text{for } i=1,\dots,p.
$$
After applying the inverse DFT, these $p$ slice-wise identities combine into
$$
\T{S}^H * \T{J} * \T{S} = \T{J}.
$$
Hence $\T{S}$ is T-symplectic.
\end{proof}

T-symplectic tensors possess several key properties that are direct extensions of their matrix counterparts. 
We begin with the inverse formula.

\begin{proposition}
If $\T{S} \in \C^{2n \times 2n \times p}$ is T-symplectic, then it is invertible, and its inverse is given by $\T{S}^{-1} = -\T{J} * \T{S}^H * \T{J}$.
\end{proposition}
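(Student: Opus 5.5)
The plan is to verify directly, in the T-product algebra, that $\T{X} := -\T{J} * \T{S}^H * \T{J}$ is a two-sided inverse of $\T{S}$. No slice-wise matrix theory beyond what is already recorded is needed. The only ingredients are the defining identity $\T{S}^H * \T{J} * \T{S} = \T{J}$ from Definition~\ref{def:t_symplectic}, associativity of the T-product (which holds because $\bcirc$ turns it into matrix multiplication), and the identity $\T{J} * \T{J} = -\T{I}_{2n,p}$ from Remark~\ref{rmk:J}.

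First I will show that $\T{X}$ is a left inverse. By associativity,
\begin{equation*}
\T{X} * \T{S} = -\T{J} * (\T{S}^H * \T{J} * \T{S}) = -\T{J} * \T{J} = \T{I}_{2n,p}.
\end{equation*}

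For the right inverse I will avoid a tensor-level argument ("a one-sided inverse of a square object is two-sided") and pass through $\bcirc$ or the Fourier domain instead. The left-inverse identity gives $\bcirc(\T{X})\bcirc(\T{S}) = \bcirc(\T{I}_{2n,p}) = \M{I}_{2np}$. Since $\bcirc(\T{S})$ is a square matrix, it follows that $\bcirc(\T{S})$ is invertible with inverse $\bcirc(\T{X})$, so $\bcirc(\T{S})\bcirc(\T{X}) = \M{I}_{2np}$ as well. Equivalently, $\T{S} * \T{X} = \T{I}_{2n,p}$.

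As an alternative route, I could work slice-wise. By Theorem~\ref{thm:symplectic_equiv}, each $\widehat{\M{S}}^{(i)}$ is symplectic. Taking determinants in $(\widehat{\M{S}}^{(i)})^H \M{J}\widehat{\M{S}}^{(i)} = \M{J}$ gives $|\det \widehat{\M{S}}^{(i)}|^2 = 1$, so each slice is invertible. The matrix inverse $-\M{J}(\widehat{\M{S}}^{(i)})^H\M{J}$ then lifts back by the inverse DFT, because the Fourier transform of $\T{S}^H$ has slices $(\widehat{\M{S}}^{(i)})^H$ (this follows from $\bcirc(\T{S}^H) = \bcirc(\T{S})^H$ together with Lemma~\ref{LemmaFourierBlockDiag}).

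The only step needing care is the passage from a left inverse to a two-sided inverse. I will handle it by the $\bcirc$ argument above, which reduces it to the standard fact for square matrices. The rest is routine algebra.
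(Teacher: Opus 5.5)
Your proposal is correct and follows essentially the same route as the paper. Both get the left inverse $-\T{J} * \T{S}^H * \T{J}$ from associativity, the defining relation $\T{S}^H * \T{J} * \T{S} = \T{J}$ and $\T{J} * \T{J} = -\T{I}_{2n,p}$, and then obtain the right inverse by reducing to the fact that a one-sided inverse of a square matrix is two-sided. The only cosmetic difference is that the paper does this last step on the Fourier-domain slices $\widehat{\M{S}}^{(i)}$ rather than on $\bcirc(\T{S})$, which is equivalent by Lemma~\ref{LemmaFourierBlockDiag}.
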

\begin{proof}
Define
$$
\T{R} = -\T{J} * \T{S}^H * \T{J}.
$$
We first show that $\T{R}$ is a left inverse of $\T{S}$. Since $\T{S}$ is T-symplectic,
$$
\T{S}^H * \T{J} * \T{S} = \T{J}.
$$
Multiplying this identity on the left by $-\T{J}$, we obtain
$$
(-\T{J}) * \T{S}^H * \T{J} * \T{S} = (-\T{J}) * \T{J}.
$$
By associativity of the T-product and the identity $\T{J} * \T{J} = -\T{I}_{2n,p}$ from Remark~\ref{rmk:J}, the right-hand side becomes
$$
(-\T{J}) * \T{J} = -(\T{J} * \T{J}) = -(-\T{I}_{2n,p}) = \T{I}_{2n,p}.
$$
Hence,
$$
\T{R} * \T{S} = \T{I}_{2n,p}.
$$
Mapping to the Fourier domain gives
$$
\widehat{\M{R}}^{(i)}\widehat{\M{S}}^{(i)} = \M{I}_{2n}
\quad \text{for every } i=1,\dots,p.
$$
Therefore, each matrix $\widehat{\M{S}}^{(i)}$ is invertible and
$$
\widehat{\M{R}}^{(i)} = (\widehat{\M{S}}^{(i)})^{-1}.
$$
Consequently,
$$
\widehat{\M{S}}^{(i)}\widehat{\M{R}}^{(i)} = \M{I}_{2n}
\quad \text{for every } i=1,\dots,p.
$$
Applying the inverse DFT to this last expression, we obtain
$$
\T{S} * \T{R} = \T{I}_{2n,p}.
$$
Thus $\T{R}$ is both a left inverse and a right inverse of $\T{S}$ and, therefore,
$$
\T{S}^{-1} = \T{R} = -\T{J} * \T{S}^H * \T{J}.
$$
\end{proof}

A fundamental connection exists between Hamiltonian and symplectic structures via the matrix exponential. In the T-product setting, we define the tensor exponential by the absolutely convergent T-power series
\begin{equation*}
\exp(\T{H}) := \sum_{k=0}^{\infty} \frac{1}{k!}\T{H}^{*k}, \quad 
\T{H}^{*0} := \T{I}_{2n,p},
\end{equation*}
where $\T{H}^{*k} = \overbrace{\T{H}*\T{H}*\cdots*\T{H}}^{k~\text{times}}$, which is consistent with the general framework under the T-product in~\cite{ju2025geometric}. Equivalently, its slices in the Fourier domain are obtained by applying the matrix exponential slice-wise
\begin{equation*}
\widehat{\exp(\T{H})}^{(i)} = \exp(\widehat{\M{H}}^{(i)}),
\end{equation*}
for $i=1,\dots,p$.
This relationship extends the classical Hamiltonian-symplectic correspondence to the tensor case.

\begin{proposition}
\label{prop:exponential_map}
If $\T{H}$ is a T-Hamiltonian tensor, then $\T{S} = \exp(\T{H})$ is a T-symplectic tensor.
\end{proposition}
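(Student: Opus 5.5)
The plan is to reduce everything to the Fourier domain and then apply the classical matrix fact that the exponential of a Hamiltonian matrix is symplectic. First, by Theorem~\ref{thm:hamiltonian_equiv}, each slice $\M{H}_i := \widehat{\M{H}}^{(i)}$ is a Hamiltonian matrix. By the slice-wise description of the T-exponential stated just before the proposition, $\widehat{\M{S}}^{(i)} = \exp(\M{H}_i)$ for every $i$. If needed, this identity can be justified by noting two facts. The T-power series maps under the DFT to the sequence of matrix power series, since $\widehat{\T{H}^{*k}}^{(i)} = \M{H}_i^k$. The DFT is also a fixed linear (hence continuous) map on the finite-dimensional space $\C^{2n\times 2n\times p}$, so it commutes with the absolutely convergent sum.

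By Theorem~\ref{thm:symplectic_equiv}, it then suffices to show that $\exp(\M{H}_i)^H \M{J}\exp(\M{H}_i) = \M{J}$ for each fixed $i$. I would take this matrix identity from the Hamiltonian relation in the form already derived in the proof of Theorem~\ref{thm:spectral_symmetry}, namely $\M{H}_i^H \M{J} = -\M{J}\M{H}_i$.

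\emph{Route 1 (conjugation).} The relation can be rewritten as $\M{H}_i^H = \M{J}\M{H}_i\M{J}^{-1}\cdot(-1)$, that is, $-\M{H}_i^H = \M{J}\M{H}_i\M{J}^{-1}$. Since the exponential commutes with conjugation and with conjugate transposition, we get
\[
\exp(\M{H}_i)^H = \exp(\M{H}_i^H) = \M{J}\exp(-\M{H}_i)\M{J}^{-1}.
\]
Multiplying on the right by $\M{J}\exp(\M{H}_i)$ gives
\[
\exp(\M{H}_i)^H\M{J}\exp(\M{H}_i) = \M{J}\exp(-\M{H}_i)\exp(\M{H}_i) = \M{J}.
\]

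\emph{Route 2 (derivative).} As a cross-check, set $\Phi(t) = \exp(t\M{H}_i)^H\M{J}\exp(t\M{H}_i)$. Differentiating gives $\Phi'(t) = \exp(t\M{H}_i)^H(\M{H}_i^H\M{J} + \M{J}\M{H}_i)\exp(t\M{H}_i) = 0$. Since $\Phi(0) = \M{J}$, we conclude $\Phi(1) = \M{J}$.

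Either route shows that every slice $\widehat{\M{S}}^{(i)}$ is symplectic, and Theorem~\ref{thm:symplectic_equiv} then gives that $\T{S}$ is T-symplectic.

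The only step needing care is the interchange of the DFT with the infinite T-power series, that is, confirming the slice-wise exponential formula. Since the paper already states this formula as equivalent to the definition, I expect to cite it directly, adding at most a one-line linearity and continuity remark. The matrix identity itself is routine.
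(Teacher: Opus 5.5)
Your proof is correct and follows the paper's structure: you get Hamiltonian slices from Theorem~\ref{thm:hamiltonian_equiv}, use the slice-wise exponential, and lift back with Theorem~\ref{thm:symplectic_equiv}. Your Route~2 is exactly the paper's argument, which differentiates $\exp(t\widehat{\M{H}}^{(i)})^H\M{J}\exp(t\widehat{\M{H}}^{(i)})$ and uses $(\widehat{\M{H}}^{(i)})^H\M{J}+\M{J}\widehat{\M{H}}^{(i)}=\M{0}$.

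Your Route~1 is a valid calculus-free alternative, resting on the similarity $-\M{H}_i^H=\M{J}\M{H}_i\M{J}^{-1}$ already used in the spectral-symmetry proof. Your remark that the DFT commutes with the convergent T-power series also justifies the slice-wise exponential formula, which the paper only asserts.
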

\begin{proof}
Let $i \in \{1,\dots,p\}$ be fixed. Then, by Theorem~\ref{thm:hamiltonian_equiv}, the matrix $\widehat{\M{H}}^{(i)}$ is Hamiltonian. Set for $t\in\mathbb{R}$
$$
\M{F}_i(t) = \exp(t\widehat{\M{H}}^{(i)})^H \M{J}\exp(t\widehat{\M{H}}^{(i)}).
$$
We differentiate $\M{F}_i(t)$ with respect to $t$. Since
$$
\frac{d}{dt}\exp(t\widehat{\M{H}}^{(i)}) = \widehat{\M{H}}^{(i)}\exp(t\widehat{\M{H}}^{(i)})
$$
and
$$
\frac{d}{dt}\exp(t\widehat{\M{H}}^{(i)})^H
=
\exp(t\widehat{\M{H}}^{(i)})^H(\widehat{\M{H}}^{(i)})^H,
$$
the product rule gives
\begin{align*}
\M{F}_i'(t)
&=
\exp(t\widehat{\M{H}}^{(i)})^H(\widehat{\M{H}}^{(i)})^H \M{J}\exp(t\widehat{\M{H}}^{(i)}) +
\exp(t\widehat{\M{H}}^{(i)})^H \M{J}\widehat{\M{H}}^{(i)}\exp(t\widehat{\M{H}}^{(i)}) \\
&=
\exp(t\widehat{\M{H}}^{(i)})^H
\left(
(\widehat{\M{H}}^{(i)})^H \M{J} + \M{J}\widehat{\M{H}}^{(i)}
\right)
\exp(t\widehat{\M{H}}^{(i)}).
\end{align*}

Since $\widehat{\M{H}}^{(i)}$ is Hamiltonian,
$$
(\M{J}\widehat{\M{H}}^{(i)})^H = \M{J}\widehat{\M{H}}^{(i)}.
$$
Because $\M{J}^H = -\M{J}$, the left-hand side is
$$
(\M{J}\widehat{\M{H}}^{(i)})^H
=
(\widehat{\M{H}}^{(i)})^H \M{J}^H
=
-(\widehat{\M{H}}^{(i)})^H \M{J}.
$$
Therefore
$$
-(\widehat{\M{H}}^{(i)})^H \M{J} = \M{J}\widehat{\M{H}}^{(i)},
$$
or equivalently,
$$
(\widehat{\M{H}}^{(i)})^H \M{J} + \M{J}\widehat{\M{H}}^{(i)} = \M{0}.
$$
Substituting this identity into the formula for $\M{F}_i'(t)$, we obtain
$$
\M{F}_i'(t) = \M{0}
\quad \text{for every } t \in \R.
$$
Hence $\M{F}_i(t)$ is constant. Evaluating at $t=0$ gives
$$
\M{F}_i(0) = \M{I}_{2n}^H \M{J}\M{I}_{2n} = \M{J},
$$
and therefore 
$$
\M{F}_i(1) = \M{J}.
$$
This means that
$$
\exp(\widehat{\M{H}}^{(i)})^H \M{J}\exp(\widehat{\M{H}}^{(i)}) = \M{J},
$$
so $\exp(\widehat{\M{H}}^{(i)})$ is symplectic.

Since this holds for every $i=1,\dots,p$, each slice of $\exp(\T{H})$ in the Fourier domain is symplectic. Theorem~\ref{thm:symplectic_equiv} now implies that $\exp(\T{H})$ is T-symplectic.
\end{proof}

\subsection{The T-Williamson Normal Form}
We state the tensor analogue of Williamson's theorem (\cite{williamson1936algebraic}). The classical result is formulated for real symmetric positive-definite matrices and yields a symplectic congruence normal form in that setting  (\cite{weedbrook2012gaussian,nicacio2021williamson}). 
Accordingly, if we assume that each slice in the Fourier domain is real symmetric positive-definite, the classical construction applies on the slices.
We also discuss the failure of a direct extension to arbitrary Hermitian positive-definite slices under the convention $\M{S}^H \M{J}\M{S} = \M{J}$.

\begin{definition}
\label{def:t_diagonal}
A tensor $\T{D} \in \C^{n\times n\times p}$ is called \emph{T-diagonal} if each frontal slice $\widehat{\M{D}}^{(i)}$ of $\widehat{\T{D}}$ is a diagonal matrix.
\end{definition}

\begin{theorem}
\label{thm:t_williamson}
Let $\T{M} \in \C^{2n \times 2n \times p}$. Assume that for each $i=1,\dots,p$, the slice $\widehat{\M{M}}^{(i)}$ is a real symmetric positive-definite matrix. Then, there exist a T-symplectic tensor $\T{S}$ and a T-diagonal tensor $\T{D}$ such that
\begin{equation}
    \T{M} = \T{S}^H * \T{D} * \T{S}.
\end{equation}
Moreover, for each $i=1,\dots,p$, the $i$-th slice of $\T{D}$ in the Fourier domain has the form
\begin{equation}
    \widehat{\M{D}}^{(i)} = \begin{bmatrix} \widehat{\M{\Lambda}}^{(i)} & \M{0} \\ \M{0} & \widehat{\M{\Lambda}}^{(i)} \end{bmatrix},
\end{equation}
where $\widehat{\M{\Lambda}}^{(i)} = \diag(\lambda_1^{(i)},\dots,\lambda_n^{(i)})$ and $\lambda_j^{(i)} > 0$ for $j=1,\dots,n$.
\end{theorem}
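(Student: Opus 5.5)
The proof works slice by slice in the Fourier domain, followed by a reassembly through the inverse DFT. Fix $i\in\{1,\dots,p\}$ and set $\M{M}_i=\widehat{\M{M}}^{(i)}$, which by hypothesis is real symmetric positive-definite. We apply the classical Williamson theorem to $\M{M}_i$. To keep the argument self-contained and constructive, we run the standard construction explicitly. Form $\M{M}_i^{-1/2}\M{J}\M{M}_i^{-1/2}$, which is real and skew-symmetric with nonsingular square. Bring it to real block normal form by a real orthogonal $\M{Q}_i$, so that $\M{Q}_i^T\M{M}_i^{-1/2}\M{J}\M{M}_i^{-1/2}\M{Q}_i=\begin{bmatrix}\M{0}&\M{\Omega}_i\\-\M{\Omega}_i&\M{0}\end{bmatrix}$ with $\M{\Omega}_i=\diag(\omega_1^{(i)},\dots,\omega_n^{(i)})$ and $\omega_j^{(i)}>0$. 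After a permutation, this is the usual $2\times 2$ block form, ordered so that the blocks match the $(q,p)$ splitting of $\M{J}$. Put $\widehat{\M{\Lambda}}^{(i)}=\M{\Omega}_i^{-1}$ and $\M{D}_i=\diag(\widehat{\M{\Lambda}}^{(i)},\widehat{\M{\Lambda}}^{(i)})$. Then define $\M{T}_i=\M{M}_i^{-1/2}\M{Q}_i\M{D}_i^{1/2}$ and $\M{S}_i=\M{T}_i^{-1}$.

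The slice-level verification has two parts. First, $\M{T}_i^T\M{M}_i\M{T}_i=\M{D}_i$ is immediate, which gives $\M{M}_i=\M{S}_i^T\M{D}_i\M{S}_i$. Second, $\M{T}_i^T\M{J}\M{T}_i=\M{D}_i^{1/2}\begin{bmatrix}\M{0}&\M{\Omega}_i\\-\M{\Omega}_i&\M{0}\end{bmatrix}\M{D}_i^{1/2}=\M{J}$, because the diagonal factors $\M{\Lambda}^{1/2}\M{\Omega}\M{\Lambda}^{1/2}$ reduce to $\M{I}_n$. Hence $\M{T}_i$, and therefore $\M{S}_i$, is real symplectic. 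Since $\M{S}_i$ is real, $\M{S}_i^H=\M{S}_i^T$, so $\M{S}_i^H\M{J}\M{S}_i=\M{J}$ holds in the Hermitian convention used in the paper. The same reality gives $\M{M}_i=\M{S}_i^H\M{D}_i\M{S}_i$.

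To assemble the tensors, define $\widehat{\T{S}}$ and $\widehat{\T{D}}$ by prescribing their Fourier slices $\M{S}_i$ and $\M{D}_i$, and obtain $\T{S}$ and $\T{D}$ through the inverse transform (eq:ifft). By Theorem~\ref{thm:symplectic_equiv}, $\T{S}$ is T-symplectic. By Definition~\ref{def:t_diagonal}, $\T{D}$ is T-diagonal and has the stated block form with positive $\lambda_j^{(i)}$. Next we need the Fourier slices of $\T{S}^H$ to be $(\M{S}_i)^H$. This follows from $\bcirc(\T{S}^H)=\bcirc(\T{S})^H$ together with Lemma~\ref{LemmaFourierBlockDiag}, since the unitary conjugation commutes with taking $H$. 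The slice-wise product rule then gives $\widehat{(\T{S}^H*\T{D}*\T{S})}^{(i)}=\M{S}_i^H\M{D}_i\M{S}_i=\M{M}_i$ for every $i$. Injectivity of the DFT yields $\T{M}=\T{S}^H*\T{D}*\T{S}$.

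The main obstacle is the normal-form step for the skew-symmetric matrix. One must ensure that $\M{Q}_i$ can be chosen real orthogonal with the $\pm\omega$ pairs arranged in the $\M{J}$-compatible $2\times2$ block pattern. One must also ensure the signs are oriented so that $\M{\Omega}_i$ is positive, which can be arranged by swapping vectors within a pair. Once that is settled, everything else is bookkeeping. An alternative is to cite Williamson's theorem directly for each slice; we would note that its real symplectic matrix automatically satisfies the Hermitian convention. No compatibility across slices, such as conjugate symmetry, is required, because the theorem allows complex $\T{M}$, $\T{S}$, and $\T{D}$ in the spatial domain.
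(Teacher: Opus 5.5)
Your proof is correct and follows the same route as the paper. You apply the classical Williamson theorem slice by slice in the Fourier domain, with your $\M{T}_i$ playing the role of the paper's $\widehat{\M{R}}^{(i)}$ and $\M{S}_i=\M{T}_i^{-1}$, then reassemble through the inverse DFT and Theorem~\ref{thm:symplectic_equiv}. The only differences are elaborations the paper leaves implicit: you carry out the Williamson construction explicitly via $\M{M}_i^{-1/2}\M{J}\M{M}_i^{-1/2}$ rather than citing it, and you justify that the Fourier slices of $\T{S}^H$ are $\M{S}_i^H$, which the paper uses without comment.
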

\begin{proof}
Let $i \in \{1,\dots,p\}$ be fixed. 
By hypothesis, the matrix $\widehat{\M{M}}^{(i)}$ is real symmetric positive-definite. 
The classical Williamson theorem for real symmetric positive-definite matrices therefore yields a real symplectic matrix $\widehat{\M{R}}^{(i)} \in \R^{2n\times 2n}$ and a diagonal matrix
$$
\widehat{\M{\Lambda}}^{(i)} = \diag(\lambda_1^{(i)},\dots,\lambda_n^{(i)})
$$
with $\lambda_j^{(i)} > 0$ for $j=1,\dots,n$, such that
\begin{equation}\label{eq:Matrix_Williamson}
(\widehat{\M{R}}^{(i)})^T \widehat{\M{M}}^{(i)} \widehat{\M{R}}^{(i)}
=
\begin{bmatrix}
\widehat{\M{\Lambda}}^{(i)} & \M{0} \\
\M{0} & \widehat{\M{\Lambda}}^{(i)}
\end{bmatrix}.
\end{equation}
Set
$$
\widehat{\M{D}}^{(i)}
=
\begin{bmatrix}
\widehat{\M{\Lambda}}^{(i)} & \M{0} \\
\M{0} & \widehat{\M{\Lambda}}^{(i)}
\end{bmatrix}
$$
and define
$$
\widehat{\M{S}}^{(i)} = (\widehat{\M{R}}^{(i)})^{-1}.
$$
Since $\widehat{\M{R}}^{(i)}$ is symplectic, we have
$$
(\widehat{\M{R}}^{(i)})^T \M{J}\widehat{\M{R}}^{(i)} = \M{J}.
$$
Multiplying this identity on the left by $(\widehat{\M{R}}^{(i)})^{-T}$ and on the right by $(\widehat{\M{R}}^{(i)})^{-1}$ yields
$$
((\widehat{\M{R}}^{(i)})^{-1})^T \M{J}(\widehat{\M{R}}^{(i)})^{-1} = \M{J},
$$
so $\widehat{\M{S}}^{(i)}$ is symplectic. Because $\widehat{\M{S}}^{(i)}$ is real, its transpose and conjugate transpose coincide. Multiplying the Williamson identity in \eqref{eq:Matrix_Williamson} on the left by $(\widehat{\M{R}}^{(i)})^{-T}$ and on the right by $(\widehat{\M{R}}^{(i)})^{-1}$, we obtain
$$
\widehat{\M{M}}^{(i)}
=
(\widehat{\M{R}}^{(i)})^{-T}\widehat{\M{D}}^{(i)}(\widehat{\M{R}}^{(i)})^{-1}
=
(\widehat{\M{S}}^{(i)})^T\widehat{\M{D}}^{(i)}\widehat{\M{S}}^{(i)}
=
(\widehat{\M{S}}^{(i)})^H\widehat{\M{D}}^{(i)}\widehat{\M{S}}^{(i)}.
$$
Thus, for the fixed index $i$, we have produced a real symplectic matrix $\widehat{\M{S}}^{(i)}$ and a matrix $\widehat{\M{D}}^{(i)}$ of the required diagonal form satisfying
$$
\widehat{\M{M}}^{(i)} = (\widehat{\M{S}}^{(i)})^H\widehat{\M{D}}^{(i)}\widehat{\M{S}}^{(i)}.
$$

We perform this construction for each $i=1,\dots,p$. Let $\widehat{\T{S}}$ and $\widehat{\T{D}}$ be the tensors whose $i$-th frontal slices are $\widehat{\M{S}}^{(i)}$ and $\widehat{\M{D}}^{(i)}$, respectively, and let $\T{S}$ and $\T{D}$ be the results of applying slice-wise the inverse transform of $\widehat{\T{S}}$ and $\widehat{\T{D}}$ as in Lemma \ref{LemmaFourierBlockDiag}. Since every frontal slice of $\widehat{\T{D}}$ is diagonal, Definition~\ref{def:t_diagonal} implies that $\T{D}$ is T-diagonal.

It remains to verify that $\T{S}$ is T-symplectic. For each $i=1,\dots,p$, the matrix $\widehat{\M{S}}^{(i)}$ is symplectic, and hence
$$
(\widehat{\M{S}}^{(i)})^H \M{J}\widehat{\M{S}}^{(i)} = \M{J}.
$$
Using Definition~\ref{def:t_unit_J}, we have $\widehat{\M{J}}^{(i)} = \M{J}$ for every $i$, so the preceding identity can be rewritten as
$$
(\widehat{\M{S}}^{(i)})^H \widehat{\M{J}}^{(i)} \widehat{\M{S}}^{(i)} = \widehat{\M{J}}^{(i)}
\quad \text{for every } i=1,\dots,p.
$$
By Theorem~\ref{thm:symplectic_equiv}, this implies that $\T{S}$ is T-symplectic.

Define now
$$
\T{N} := \T{S}^H * \T{D} * \T{S}.
$$
For each $i=1,\dots,p$, the $i$-th frontal slice of $\widehat{\T{N}}$ is
$$
\widehat{\M{N}}^{(i)}
=
(\widehat{\M{S}}^{(i)})^H \widehat{\M{D}}^{(i)} \widehat{\M{S}}^{(i)}
=
\widehat{\M{M}}^{(i)}.
$$
Thus, the tensors $\widehat{\T{N}}$ and $\widehat{\T{M}}$ have identical frontal slices. Applying the inverse DFT yields
$$
\T{M} = \T{S}^H * \T{D} * \T{S}.
$$
\end{proof}

\begin{algorithm}[t]
\caption{T-Williamson normal form}
\label{alg:t_williamson}
\small
\DontPrintSemicolon
\SetKwInOut{Input}{Input}
\SetKwInOut{Output}{Output}
\SetKwFunction{Williamson}{Williamson}
\SetKwFunction{DFT}{DFT}
\SetKwFunction{IDFT}{IDFT}
\Input{$\T{M}\in\C^{2n\times 2n\times p}$ with real symmetric positive-definite Fourier-domain slices}
\Output{Tensors $\T{S}$ and $\T{D}$ such that $\T{M}=\T{S}^H*\T{D}*\T{S}$}
$\widehat{\T{M}}\leftarrow \DFT_3(\T{M})$\;
\For{$i=1,\ldots,p$}{
$(\widehat{\M{R}}^{(i)},\widehat{\M{\Lambda}}^{(i)})\leftarrow \Williamson(\widehat{\M{M}}^{(i)})$\;
$\widehat{\M{D}}^{(i)}\leftarrow \operatorname{blkdiag}(\widehat{\M{\Lambda}}^{(i)},\widehat{\M{\Lambda}}^{(i)})$\;
$\widehat{\M{S}}^{(i)}\leftarrow(\widehat{\M{R}}^{(i)})^{-1}$\;
}
$\T{S}\leftarrow \IDFT_3(\widehat{\T{S}})$\;
$\T{D}\leftarrow \IDFT_3(\widehat{\T{D}})$\;
\Return{$\T{S},\T{D}$}\;
\end{algorithm}

The constructive proof gives the slice-wise 
Algorithm \ref{alg:t_williamson}, where
$\Williamson(\M{A})$ denotes the classical Williamson decomposition (\cite{williamson1936algebraic,nicacio2021williamson,houde2024matrix}), returning a real symplectic matrix $\M{R}$ and a diagonal matrix $\M{\Lambda}$ such that $\M{R}^T\M{A}\M{R}=\operatorname{blkdiag}(\M{\Lambda},\M{\Lambda})$.

The next proposition shows that the real-slice hypothesis is essential under the Hermitian symplectic convention adopted here (\cite{simon1999congruences,ding2024quantum}).

\begin{proposition}
\label{prop:complex_obstruction}
Under the convention $\M{S}^H \M{J}\M{S} = \M{J}$, a Williamson-type factorization of the form
$$
\M{M} = \M{S}^H
\begin{bmatrix}
\M{\Lambda} & \M{0} \\
\M{0} & \M{\Lambda}
\end{bmatrix}
\M{S},
\qquad
\M{\Lambda} = \diag(\lambda_1,\dots,\lambda_n), \quad \lambda_j > 0,
$$
does not exist for every complex Hermitian positive-definite matrix $\M{M} \in \C^{2n \times 2n}$.
\end{proposition}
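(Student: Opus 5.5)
The plan is to find a similarity invariant of $\M{M}$ that is preserved by every factorization $\M{M} = \M{S}^H \M{D} \M{S}$ with $\M{S}^H\M{J}\M{S}=\M{J}$. I will show that this invariant has a rigid symmetric form when $\M{D}=\operatorname{blkdiag}(\M{\Lambda},\M{\Lambda})$. Then I will exhibit a $2\times 2$ Hermitian positive-definite matrix that violates the symmetry. So the proof is an explicit counterexample with $n=1$, guarded by an invariant.

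\emph{Step 1 (invariant).} Suppose $\M{M} = \M{S}^H \M{D}\M{S}$ with $\M{S}^H\M{J}\M{S}=\M{J}$. Then $\M{S}$ is invertible, since $\det\M{J}\neq 0$. The defining identity rearranges to $\M{S}^{-H}\M{J} = \M{J}\M{S}$. Hence
$$
\M{M}^{-1}\M{J} = \M{S}^{-1}\M{D}^{-1}\M{S}^{-H}\M{J} = \M{S}^{-1}\left(\M{D}^{-1}\M{J}\right)\M{S},
$$
so $\M{M}^{-1}\M{J}$ is similar to $\M{D}^{-1}\M{J}$. With $\M{D}=\operatorname{blkdiag}(\M{\Lambda},\M{\Lambda})$, the matrix $\M{D}^{-1}\M{J}$ decouples into the $2\times 2$ blocks $\lambda_j^{-1}\begin{bmatrix}0&1\\-1&0\end{bmatrix}$, whose eigenvalues are $\pm \sqrt{-1}/\lambda_j$. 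Equivalently, the generalized eigenvalues $\mu$ of the pencil $\det(\sqrt{-1}\,\M{J}-\mu\M{M})=0$ must come in pairs $\{\mu,-\mu\}$. This is a necessary condition for the factorization to exist, and it is the only structural fact I will need.

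\emph{Step 2 (counterexample).} Take $n=1$ and $\M{M}=\begin{bmatrix}1 & \sqrt{-1}\,b\\ -\sqrt{-1}\,b & 1\end{bmatrix}$ with $0<b<1$, for instance $b=1/2$. This matrix is Hermitian with eigenvalues $1\pm b>0$, so it is positive-definite. A direct $2\times2$ determinant computation gives
$$
\det(\sqrt{-1}\,\M{J}-\mu\M{M}) = \mu^2-(1-\mu b)^2 .
$$
Its roots are $\mu = 1/(1+b)$ and $\mu=-1/(1-b)$, which for $b=1/2$ are $2/3$ and $-2$. These are not negatives of each other, so by Step 1 no symplectic $\M{S}$ (in the Hermitian sense) and positive diagonal $\M{\Lambda}$ can produce $\M{M}$.

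The only delicate point is Step 1: I must make sure to use the Hermitian relation $\M{S}^{-H}\M{J}=\M{J}\M{S}$ rather than the transpose version, and confirm the pairing claim for $\M{D}^{-1}\M{J}$. The arithmetic in Step 2 is routine and I would double-check it by recomputing $\M{M}^{-1}\M{J}$ directly. Finally, I will remark that real Hermitian matrices have $b=0$ here, so the pairing is restored. This is consistent with the real-slice hypothesis in Theorem~\ref{thm:t_williamson}.
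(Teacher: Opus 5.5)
Your proposal is correct and is essentially the paper's proof. The paper also shows that $\M{M}^{-1}\M{J}=\M{S}^{-1}(\M{D}^{-1}\M{J})\M{S}$ must have spectrum $\{\pm\sqrt{-1}/\lambda_j\}$, and refutes this with a $2\times 2$ Hermitian matrix with imaginary off-diagonal entries. In fact your choice $b=1/2$ gives exactly $\tfrac{1}{2}\M{M}_0$ for the paper's $\M{M}_0$, and your pencil roots $2/3,\,-2$ are just $2\sqrt{-1}$ times the paper's eigenvalues $-\sqrt{-1}/3,\,\sqrt{-1}$ of $\M{M}_0^{-1}\M{J}$.
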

\begin{proof}
We provide a counterexample. Consider the Hermitian positive-definite matrix
$$
\M{M}_0 =
\begin{bmatrix}
2 & \sqrt{-1} \\
-\sqrt{-1} & 2
\end{bmatrix}.
$$
Suppose, for contradiction, that there exist a matrix $\M{S} \in \C^{2 \times 2}$ and a positive number $\lambda$ such that
\begin{equation}\label{eq:M0_decomposition}
\M{M}_0 = \M{S}^H
\begin{bmatrix}
\lambda & 0 \\
0 & \lambda
\end{bmatrix}
\M{S}
\end{equation}
and
$$
\M{S}^H \M{J}\M{S} = \M{J}.
$$
Set
$$
\M{D} =
\begin{bmatrix}
\lambda & 0 \\
0 & \lambda
\end{bmatrix}.
$$
Since $\M{M}_0 = \M{S}^H \M{D}\M{S}$ and $\M{S}$ is invertible, we have
$$
\M{M}_0^{-1} = \M{S}^{-1}\M{D}^{-1}\M{S}^{-H}.
$$
Multiplying by $\M{J}$ on the right yields
$$
\M{M}_0^{-1}\M{J} = \M{S}^{-1}\M{D}^{-1}\M{S}^{-H}\M{J}.
$$
From the symplectic identity $\M{S}^H \M{J}\M{S} = \M{J}$, multiplying on the left by $\M{S}^{-H}$ gives
$$
\M{J}\M{S} = \M{S}^{-H}\M{J}.
$$
Substituting this relation into the previous identity, we obtain
$$
\M{M}_0^{-1}\M{J} = \M{S}^{-1}\M{D}^{-1}\M{J}\M{S}.
$$
Hence, $\M{M}_0^{-1}\M{J}$ is similar to $\M{D}^{-1}\M{J}$.

Now,
$$
\M{D}^{-1} =
\begin{bmatrix}
\lambda^{-1} & 0 \\
0 & \lambda^{-1}
\end{bmatrix},
$$
and therefore
$$
\M{D}^{-1}\M{J}
=
\begin{bmatrix}
\lambda^{-1} & 0 \\
0 & \lambda^{-1}
\end{bmatrix}
\begin{bmatrix}
0 & 1 \\
-1 & 0
\end{bmatrix}
=
\begin{bmatrix}
0 & \lambda^{-1} \\
-\lambda^{-1} & 0
\end{bmatrix}.
$$
The characteristic polynomial of this matrix is
$$
\det
\begin{bmatrix}
\mu & -\lambda^{-1} \\
\lambda^{-1} & \mu
\end{bmatrix}
=
\mu^2 + \lambda^{-2},
$$
so its eigenvalues are $\pm \sqrt{-1}\,\lambda^{-1}$. Consequently, the spectrum of $\M{D}^{-1}\M{J}$, and hence also the spectrum of $\M{M}_0^{-1}\M{J}$, must be
\begin{equation}\label{eq:spectrum}
\left\{
\pm \frac{\sqrt{-1}}{\lambda}
\right\}.
\end{equation}

On the other hand, for the explicit matrix $\M{M}_0$, we compute
$$
\M{M}_0^{-1}
=
\frac{1}{3}
\begin{bmatrix}
2 & -\sqrt{-1} \\
\sqrt{-1} & 2
\end{bmatrix},
$$
and hence
$$
\M{M}_0^{-1}\M{J}
=
\frac{1}{3}
\begin{bmatrix}
\sqrt{-1} & 2 \\
-2 & \sqrt{-1}
\end{bmatrix}.
$$
Therefore
$$
\det\left(\mu \M{I}_2 - \M{M}_0^{-1}\M{J}\right)
=
\left(\mu - \frac{\sqrt{-1}}{3}\right)^2 + \frac{4}{9}
=
(\mu - \sqrt{-1})\left(\mu + \frac{\sqrt{-1}}{3}\right).
$$
Thus the eigenvalues of $\M{M}_0^{-1}\M{J}$ are $\sqrt{-1}$ and $-\frac{\sqrt{-1}}{3}$. These two numbers are not of the form $\pm \sqrt{-1}\,\lambda^{-1}$ for a single positive number $\lambda$, which contradicts the necessary spectral form derived above in \eqref{eq:spectrum}. 
Therefore the decomposition in \eqref{eq:M0_decomposition} cannot exist for $\M{M}_0$.
\end{proof}

The following proposition identifies the necessary conjugate-symmetry condition in the Fourier domain that guarantees real-valued recovery in the spatial domain.

\begin{proposition}
\label{prop:real_recovery}
Let $\widehat{\T{A}} \in \C^{m \times n \times p}$, and let $\T{A}$ be the tensor obtained by applying the inverse DFT along the third mode, that is,
$$
\M{A}^{(k)} = \frac{1}{p}\sum_{i=1}^{p}\omega^{-(i-1)(k-1)}\widehat{\M{A}}^{(i)},
\quad k=1,\dots,p.
$$
Then, $\T{A}$ lies in $\R^{m \times n \times p}$ if and only if
$$
\widehat{\M{A}}^{(1)} = \overline{\widehat{\M{A}}^{(1)}}
$$
and
$$
\widehat{\M{A}}^{(p-i+2)} = \overline{\widehat{\M{A}}^{(i)}}
\quad \text{for every } i=2,\dots,p.
$$
\end{proposition}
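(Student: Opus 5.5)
The plan is to work entirely with the two DFT formulas of Lemma~\ref{LemmaFourierBlockDiag}, namely the forward transform $\widehat{\M{A}}^{(i)} = \sum_{j=1}^{p}\omega^{(i-1)(j-1)}\M{A}^{(j)}$ and the inverse transform \eqref{eq:ifft}. These are mutually inverse bijections, so $\widehat{\T{A}}$ determines $\T{A}$ uniquely and conversely. Two elementary facts about $\omega = e^{2\pi\sqrt{-1}/p}$ will be used throughout: $\overline{\omega} = \omega^{-1}$ and $\omega^{p}=1$. It is convenient to index slices cyclically modulo $p$. With that convention, the index $p-i+2$ is exactly the cyclic index corresponding to the exponent $-(i-1)$, and the case $i=1$ corresponds to $p+1 \equiv 1$. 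This explains why the statement singles out the first slice.

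\emph{Necessity.} Assume every $\M{A}^{(j)}$ is real. Conjugating the forward formula gives
$$\overline{\widehat{\M{A}}^{(i)}} = \sum_{j=1}^{p}\omega^{-(i-1)(j-1)}\M{A}^{(j)}.$$
Since $\omega^{p}=1$, we have $\omega^{-(i-1)(j-1)} = \omega^{(p-i+1)(j-1)}$. The right-hand side is therefore the forward formula at index $p-i+2$, which yields $\widehat{\M{A}}^{(p-i+2)}$ for $i=2,\dots,p$. For $i=1$, all exponents vanish, so $\widehat{\M{A}}^{(1)} = \sum_j \M{A}^{(j)}$ is real.

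\emph{Sufficiency.} Assume the two conjugate-symmetry conditions hold, and conjugate the inverse formula \eqref{eq:ifft}:
$$\overline{\M{A}^{(k)}} = \frac{1}{p}\sum_{i=1}^{p}\omega^{(i-1)(k-1)}\overline{\widehat{\M{A}}^{(i)}}.$$
In this sum, replace $\overline{\widehat{\M{A}}^{(1)}}$ by $\widehat{\M{A}}^{(1)}$ and, for $i\ge 2$, replace $\overline{\widehat{\M{A}}^{(i)}}$ by $\widehat{\M{A}}^{(p-i+2)}$. Then reindex with $i' = p-i+2$. As $i$ runs over $2,\dots,p$, the index $i'$ runs over $p,\dots,2$, so the map is a bijection of $\{2,\dots,p\}$. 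Since $i-1 = p-(i'-1)$, the exponent becomes $\omega^{(p-(i'-1))(k-1)} = \omega^{-(i'-1)(k-1)}$. The $i=1$ term is unchanged because its exponent is zero. The sum is then exactly the inverse formula for $\M{A}^{(k)}$, so $\overline{\M{A}^{(k)}} = \M{A}^{(k)}$, that is, every spatial slice is real.

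The only delicate step is this reindexing: one has to check that $i\mapsto p-i+2$ permutes $\{2,\dots,p\}$ and that the exponents match modulo $p$. That check is a one-line computation, so no genuine obstacle is expected.
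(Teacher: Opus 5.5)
Your proof is correct and follows essentially the same route as the paper: for necessity you conjugate the forward DFT formula and use $\omega^{-(i-1)(j-1)}=\omega^{(p-i+1)(j-1)}$, and for sufficiency you conjugate the inverse formula and reindex via the bijection $i\mapsto p-i+2$ of $\{2,\dots,p\}$. You also state explicitly that the forward and inverse formulas are mutually inverse, which is what justifies applying the forward formula to $\T{A}$ in the necessity direction; the paper uses this fact without saying so.
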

\begin{proof}
Assume first that $\T{A}$ is real-valued. For each $i=1,\dots,p$, the forward DFT formula in Lemma \ref{LemmaFourierBlockDiag} gives
$$
\widehat{\M{A}}^{(i)} = \sum_{j=1}^{p}\omega^{(i-1)(j-1)}\M{A}^{(j)}.
$$
Since every $\M{A}^{(j)}$ is real, we have
$$
\overline{\widehat{\M{A}}^{(i)}}
=
\sum_{j=1}^{p}\overline{\omega^{(i-1)(j-1)}}\,\M{A}^{(j)}
=
\sum_{j=1}^{p}\omega^{-(i-1)(j-1)}\M{A}^{(j)}.
$$
For $i=1$, this becomes
$$
\overline{\widehat{\M{A}}^{(1)}}
=
\sum_{j=1}^{p}\omega^{0}\M{A}^{(j)}
=
\widehat{\M{A}}^{(1)}.
$$
Now let $i \in \{2,\dots,p\}$. Since $\omega^{p}=1$, we have
$$
\omega^{-(i-1)(j-1)}
=
\omega^{(p-i+1)(j-1)}
=
\omega^{(p-i+2-1)(j-1)}.
$$
Therefore
$$
\overline{\widehat{\M{A}}^{(i)}}
=
\sum_{j=1}^{p}\omega^{(p-i+2-1)(j-1)}\M{A}^{(j)}
=
\widehat{\M{A}}^{(p-i+2)}.
$$
This proves the stated conjugate-symmetry condition.

Conversely, assume that
$$
\widehat{\M{A}}^{(1)} = \overline{\widehat{\M{A}}^{(1)}}
$$
and
$$
\widehat{\M{A}}^{(p-i+2)} = \overline{\widehat{\M{A}}^{(i)}}
\quad \text{for every } i=2,\dots,p.
$$
Fix $k \in \{1,\dots,p\}$. Taking the complex conjugate of the inverse DFT formula gives
$$
\overline{\M{A}^{(j)}}
=
\frac{1}{p}\sum_{i=1}^{p}\omega^{(i-1)(j-1)}\overline{\widehat{\M{A}}^{(i)}}.
$$
We separate the term $i=1$ and use the assumption on $\widehat{\M{A}}^{(1)}$:
$$
\overline{\M{A}^{(j)}}
=
\frac{1}{p}\widehat{\M{A}}^{(1)}
+
\frac{1}{p}\sum_{i=2}^{p}\omega^{(i-1)(j-1)}\overline{\widehat{\M{A}}^{(i)}}.
$$
In the remaining sum, set $k = p-i+2$.
Then, as $i$ runs from $2$ to $p$, the index $k$ also runs through the set $\{2,\dots,p\}$. 
Moreover, $i-1 = p-k+1$.
Hence,
$$
\omega^{(i-1)(j-1)}
=
\omega^{(p-k+1)(j-1)}
=
\omega^{p(j-1)}\omega^{-(k-1)(j-1)}
=
\omega^{-(k-1)(j-1)},
$$
because $\omega^{p(j-1)} = 1$. Using the conjugate-symmetry assumption, we obtain
\begin{align*}
\overline{\M{A}^{(j)}}
&=
\frac{1}{p}\widehat{\M{A}}^{(1)}
+
\frac{1}{p}\sum_{k=2}^{p}\omega^{-(k-1)(j-1)}\widehat{\M{A}}^{(j)} \\
&=
\frac{1}{p}\sum_{k=1}^{p}\omega^{-(k-1)(j-1)}\widehat{\M{A}}^{(k)} \\
&=
\M{A}^{(j)}.
\end{align*}
Thus, every frontal slice $\M{A}^{(j)}$ is real, and therefore $\T{A} \in \R^{m \times n \times p}$.
\end{proof}

\begin{corollary}
    Assume that $\T{M}$ satisfies the hypotheses of Theorem~\ref{thm:t_williamson} and that its slices in the Fourier domain satisfy this conjugate-symmetry condition. Then the T-Williamson decomposition in Theorem~\ref{thm:t_williamson} may be chosen so that the tensors $\T{S}$ and $\T{D}$ are real-valued.
\end{corollary}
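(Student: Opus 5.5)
The plan is to revisit the slice-wise construction in the proof of Theorem~\ref{thm:t_williamson} and to make the choices on different slices consistent with one another. Then Proposition~\ref{prop:real_recovery} can be applied to $\widehat{\T{S}}$ and to $\widehat{\T{D}}$. The starting observation is that the two hypotheses together pin down a very rigid structure. By hypothesis every $\widehat{\M{M}}^{(i)}$ is real, so $\overline{\widehat{\M{M}}^{(i)}} = \widehat{\M{M}}^{(i)}$. The conjugate-symmetry condition $\widehat{\M{M}}^{(p-i+2)} = \overline{\widehat{\M{M}}^{(i)}}$ therefore reduces to
$$
\widehat{\M{M}}^{(p-i+2)} = \widehat{\M{M}}^{(i)}, \qquad i=2,\dots,p.
$$
So the Fourier-domain slices come in pairs of \emph{identical} real matrices.

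Next I partition the index set $\{1,\dots,p\}$ into the orbits of the involution $i \mapsto p-i+2$, with index $1$ fixed. These orbits are the singleton $\{1\}$, the pairs $\{i,p-i+2\}$ with $i \neq p-i+2$, and, when $p$ is even, the additional fixed point $i = p/2+1$. From each orbit I pick one representative $i$ and apply the classical Williamson theorem exactly as in the proof of Theorem~\ref{thm:t_williamson}. This yields a real symplectic $\widehat{\M{S}}^{(i)} = (\widehat{\M{R}}^{(i)})^{-1}$ and a real diagonal $\widehat{\M{D}}^{(i)} = \operatorname{blkdiag}(\widehat{\M{\Lambda}}^{(i)},\widehat{\M{\Lambda}}^{(i)})$. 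For the partner index I do not run the algorithm again; I simply set
$$
\widehat{\M{S}}^{(p-i+2)} := \widehat{\M{S}}^{(i)}, \qquad \widehat{\M{D}}^{(p-i+2)} := \widehat{\M{D}}^{(i)}.
$$
Since $\widehat{\M{M}}^{(p-i+2)} = \widehat{\M{M}}^{(i)}$, the copied pair still satisfies $\widehat{\M{M}}^{(p-i+2)} = (\widehat{\M{S}}^{(p-i+2)})^H\widehat{\M{D}}^{(p-i+2)}\widehat{\M{S}}^{(p-i+2)}$. It also keeps every slice symplectic and diagonal of the required form. The remaining steps of the proof of Theorem~\ref{thm:t_williamson}, namely Theorem~\ref{thm:symplectic_equiv} and the slice-wise identity $\widehat{\M{N}}^{(i)} = \widehat{\M{M}}^{(i)}$, then go through verbatim.

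Finally I check the hypotheses of Proposition~\ref{prop:real_recovery} for $\widehat{\T{S}}$; the argument for $\widehat{\T{D}}$ is identical. All slices are real, so $\widehat{\M{S}}^{(1)} = \overline{\widehat{\M{S}}^{(1)}}$. For the other indices, $\widehat{\M{S}}^{(p-i+2)} = \widehat{\M{S}}^{(i)} = \overline{\widehat{\M{S}}^{(i)}}$. Hence the inverse DFTs $\T{S}$ and $\T{D}$ are real-valued.

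The only genuine obstacle is that the Williamson factorization is not unique: $\widehat{\M{R}}^{(i)}$ can be multiplied by real symplectic orthogonal matrices, and the symplectic eigenvalues can be permuted. Running the classical algorithm independently on $\widehat{\M{M}}^{(i)}$ and $\widehat{\M{M}}^{(p-i+2)}$ could therefore produce different, though individually real, factors, and the conjugate symmetry would then fail. The phrase ``may be chosen'' in the statement is exactly what allows the copying step to remove this ambiguity. The point to state carefully is that this copying is well defined on fixed points of the involution, where the representative and its partner coincide and there is nothing to copy.
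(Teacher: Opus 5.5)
Your proposal is correct and follows essentially the same route as the paper: both pair indices under $i\mapsto p-i+2$, apply the classical Williamson factorization to one representative of each orbit (including the self-conjugate slices), define the partner's factors from that representative, and then invoke Proposition~\ref{prop:real_recovery}. The paper writes the partner factor as $\widehat{\M{S}}^{(p-i+2)} = \overline{\widehat{\M{S}}^{(i)}}$. Since the classical Williamson factor is real, this coincides with your copying step, as you correctly note.
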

\begin{proof}
Assume that $\T{M}$ satisfies the hypotheses of Theorem~\ref{thm:t_williamson} and the above conjugate-symmetry condition. 
We enforce the Williamson factors in the Fourier domain to satisfy the conjugate-symmetric condition of Proposition~\ref{prop:real_recovery}.
First consider the self-conjugate slices. The condition for $i=1$ gives
$$
\widehat{\M{M}}^{(1)} = \overline{\widehat{\M{M}}^{(1)}},
$$
so $\widehat{\M{M}}^{(1)}$ is a real symmetric positive-definite matrix. If $p$ is even, then the index $i=\frac{p}{2}+1$ satisfies $p-i+2=i$, and hence
$$
\widehat{\M{M}}^{(\frac{p}{2}+1)} = \overline{\widehat{\M{M}}^{(\frac{p}{2}+1)}},
$$
so this slice is also real symmetric positive-definite. For these self-conjugate slices we choose real symplectic Williamson factors and real diagonal factors.

Now let $i \in \{2,\dots,p\}$ with $p-i+2 \neq i$. We apply the classical Williamson factorization to the real symmetric positive-definite matrix $\widehat{\M{M}}^{(i)}$, exactly as in the slice-wise construction used in the proof of Theorem~\ref{thm:t_williamson}, and choose matrices $\widehat{\M{S}}^{(i)}$ and $\widehat{\M{D}}^{(i)}$ such that
$$
\widehat{\M{M}}^{(i)} = (\widehat{\M{S}}^{(i)})^H \widehat{\M{D}}^{(i)} \widehat{\M{S}}^{(i)}.
$$
Because the diagonal entries of $\widehat{\M{D}}^{(i)}$ are positive real numbers, the matrix $\widehat{\M{D}}^{(i)}$ is real. We then define
$$
\widehat{\M{S}}^{(p-i+2)} = \overline{\widehat{\M{S}}^{(i)}},
\qquad
\widehat{\M{D}}^{(p-i+2)} = \widehat{\M{D}}^{(i)}.
$$
Since $\M{J}$ is real, the symplectic identity is preserved under complex conjugation:
$$
(\widehat{\M{S}}^{(p-i+2)})^H \M{J}\widehat{\M{S}}^{(p-i+2)}
=
(\overline{\widehat{\M{S}}^{(i)}})^H \M{J}\,\overline{\widehat{\M{S}}^{(i)}}
=
\overline{(\widehat{\M{S}}^{(i)})^H \M{J}\widehat{\M{S}}^{(i)}}
=
\M{J}.
$$
Moreover,
\begin{align*}
(\widehat{\M{S}}^{(p-i+2)})^H \widehat{\M{D}}^{(p-i+2)} \widehat{\M{S}}^{(p-i+2)}
&=
(\overline{\widehat{\M{S}}^{(i)}})^H \widehat{\M{D}}^{(i)} \overline{\widehat{\M{S}}^{(i)}} \\
&=
\overline{(\widehat{\M{S}}^{(i)})^H \widehat{\M{D}}^{(i)} \widehat{\M{S}}^{(i)}} \\
&=
\overline{\widehat{\M{M}}^{(i)}} \\
&=
\widehat{\M{M}}^{(p-i+2)}.
\end{align*}
Thus, the factors can be chosen so that the  slices of $\widehat{\T{S}}$ and $\widehat{\T{D}}$ in the Fourier domain satisfy the same conjugate-symmetry condition as $\widehat{\T{M}}$. Applying Proposition \ref{prop:real_recovery} to $\widehat{\T{S}}$ and $\widehat{\T{D}}$, we conclude that the tensors $\T{S}$ and $\T{D}$, obtained by applying the inverse DFT, are real-valued.
\end{proof}

\section{Numerical Validation and Experiments}
\label{sec:application}

Section~\ref{sec:framework} developed the Hamiltonian-symplectic theory in the T-product algebra. 
This section examines its numerical consequences for parameter-dependent structured matrix families. 
The experiments verify the defining relations and constructive decomposition, assess runtime trends, and conclude with a Fourier-domain encoding of sampled covariance matrices from continuous-variable quantum dynamics.
All algorithms are implemented in MATLAB R2025b. 
Numerical errors are evaluated against a tolerance of $10^{-10}$.

\subsection{Verification of Theoretical Properties}

We first check the identities used in the preceding section. 
The T-Hamiltonian tensors are randomly generated by
enforcing Fourier-domain slices to be Hamiltonian,
as in Proposition~\ref{prop:T-Hamiltonian}. 
For the Williamson test, the Fourier-domain slices of $\T{M}$ are generated as real symmetric positive-definite matrices.

Let
\[
\T{K}=\T{J}*\T{H},\qquad
\T{S}_{\exp}=\exp(\T{H}).
\]
For the Williamson test, let
\[
\T{M}=\T{S}_{\mathrm{W}}^H*\T{D}*\T{S}_{\mathrm{W}}
\]
be the computed T-Williamson decomposition. The diagonal tensor $\T{D}$ has Fourier-domain diagonal entries
\[
\widehat{\M{D}}^{(i)}
=
\operatorname{diag}(\lambda^{(i)},\lambda^{(i)}),
\]
while $\mu^{(i)}$ denotes the symplectic eigenvalues of
$\widehat{\M{M}}^{(i)}$ computed independently. 
We use the following residuals to evaluate the theoretical concepts:
\begin{align*}
\mathrm{res}_{\mathrm{Symp}}
&=
\frac{\|\T{S}_{\exp}^H*\T{J}*\T{S}_{\exp}-\T{J}\|}
{\|\T{J}\|},\\
\mathrm{res}_{\mathrm{WNF}}
&=
\frac{\|\T{M}-\T{S}_{\mathrm{W}}^H*\T{D}*\T{S}_{\mathrm{W}}\|}
{\|\T{M}\|},\\
\mathrm{res}_{\mathrm{WSp}}
&=
\frac{\|\T{S}_{\mathrm{W}}^H*\T{J}*\T{S}_{\mathrm{W}}-\T{J}\|}
{\|\T{J}\|},\\
\mathrm{res}_{\mathrm{Spec}}
&=
\frac{
\left(\sum_{i=1}^{p}
\|\lambda^{(i)}-\mu^{(i)}\|_2^2
\right)^{1/2}}
{
\left(\sum_{i=1}^{p}
\|\mu^{(i)}\|_2^2
\right)^{1/2}}.
\end{align*}
Here, $\mathrm{res}_{\mathrm{Symp}}$ measures the exponential-map construction, $\mathrm{res}_{\mathrm{WNF}}$ measures the reconstruction identity, $\mathrm{res}_{\mathrm{WSp}}$ checks that the Williamson factor satisfies the T-symplectic identity, and $\mathrm{res}_{\mathrm{Spec}}$ checks the diagonal entries against an independent computation of the symplectic spectrum.

The results are shown in Table~\ref{tab:consistency_residuals}. 
The Hamiltonian residual is not listed, because it is zero by construction: each Fourier-domain slice is generated in Hamiltonian block form.

\begin{table}[ht!]
\centering
\small
\setlength{\tabcolsep}{4pt}
\caption{Residuals for consistency checks.}
\label{tab:consistency_residuals}
\begin{tabular}{@{}cccccc@{}}
\toprule
$n$ & $p$ 
& $\mathrm{res}_{\mathrm{Symp}}$ 
& $\mathrm{res}_{\mathrm{WNF}}$ 
& $\mathrm{res}_{\mathrm{WSp}}$ 
& $\mathrm{res}_{\mathrm{Spec}}$ \\
\midrule
$4$  & $8$  & $4.348\times 10^{-14}$ & $2.313\times 10^{-15}$ & $9.455\times 10^{-16}$ & $9.348\times 10^{-16}$ \\
$8$  & $16$ & $2.186\times 10^{-12}$ & $2.879\times 10^{-15}$ & $1.329\times 10^{-15}$ & $1.768\times 10^{-15}$ \\
$12$ & $24$ & $1.417\times 10^{-11}$ & $3.628\times 10^{-15}$ & $1.734\times 10^{-15}$ & $2.093\times 10^{-15}$ \\
$16$ & $32$ & $7.260\times 10^{-11}$ & $3.752\times 10^{-15}$ & $2.029\times 10^{-15}$ & $2.223\times 10^{-15}$ \\
\bottomrule
\end{tabular}
\end{table}

The following example illustrates the spectral symmetry asserted in Theorem~\ref{thm:spectral_symmetry}.

\begin{example}
Let $\T{H}\in \C^{16 \times 16 \times 16}$. We compute the eigenvalues of each of its Fourier-domain slices $\widehat{\M{H}}^{(i)}$. 
The union of these eigenvalues forms the T-spectrum of $\T{H}$. 
As shown in Figure \ref{fig:spectrum}, the T-eigenvalues on the complex plane are symmetric with respect to the imaginary axis, visually confirming that if $\lambda$ is a T-eigenvalue, so is $-\overline{\lambda}$.

\begin{figure}[ht!]
    \centering
    \includegraphics[width=0.7\textwidth]{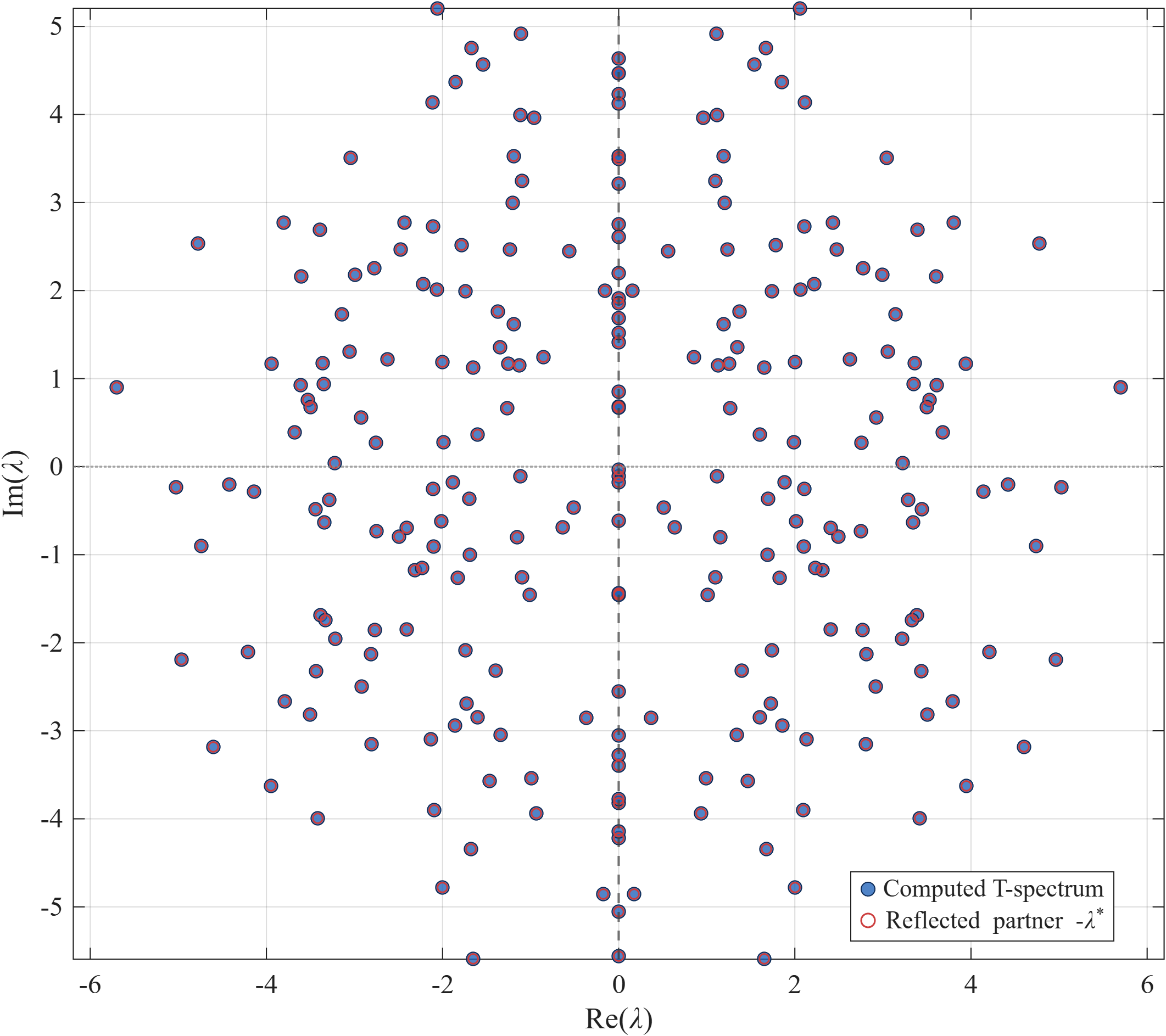}
    \caption{T-eigenvalue spectrum of a randomly generated $16 \times 16 \times 16$ T-Hamiltonian tensor. Blue markers denote the computed T-spectrum, and red hollow markers denote the reflected values $-\overline{\lambda}$. The two sets coincide pairwise, illustrating the spectral symmetry asserted in Theorem~\ref{thm:spectral_symmetry}.}
    \label{fig:spectrum}
\end{figure}
\end{example}

\subsection{Computational Cost and Scalability Analysis}
To assess the practical efficiency of our framework, we analyze the computational cost of the T-Williamson normal form algorithm (Theorem~\ref{thm:t_williamson}). 
This example reports the corresponding scalability behavior.

\begin{example}
The arithmetic cost is governed by the slice-wise classical Williamson decomposition applied to the $p$ frontal slices in the Fourier domain. 
For each slice, the constructive procedure involves dense matrix operations such as the matrix square root, the real Schur decomposition, and diagonal rescaling; each of these operations has cubic arithmetic complexity in the matrix-size parameter $n$. 
Accordingly, the arithmetic complexity of the full T-Williamson algorithm is $O(pn^3)$. 

To assess the empirical computational scaling of the implementation, we report the average of the execution time over 20 independent runs.
For the sweep in $n$, we use the sampled values $n \in \{32,48,64,96,128,160\}$ with fixed $p=16$.
The results are displayed in Figure~\ref{fig:cost_analysis}.
The left plot shows the measured runtime together with a cubic reference curve scaled to the measured timings, reflecting the slice-wise arithmetic cost $O(pn^3)$. 
The right plot shows the measured runtime as a function of the number of Fourier-domain slices $p$ for fixed $n=8$, together with a linear reference curve.

\begin{figure}[ht!]
    \centering
    \includegraphics[width=0.49\textwidth]{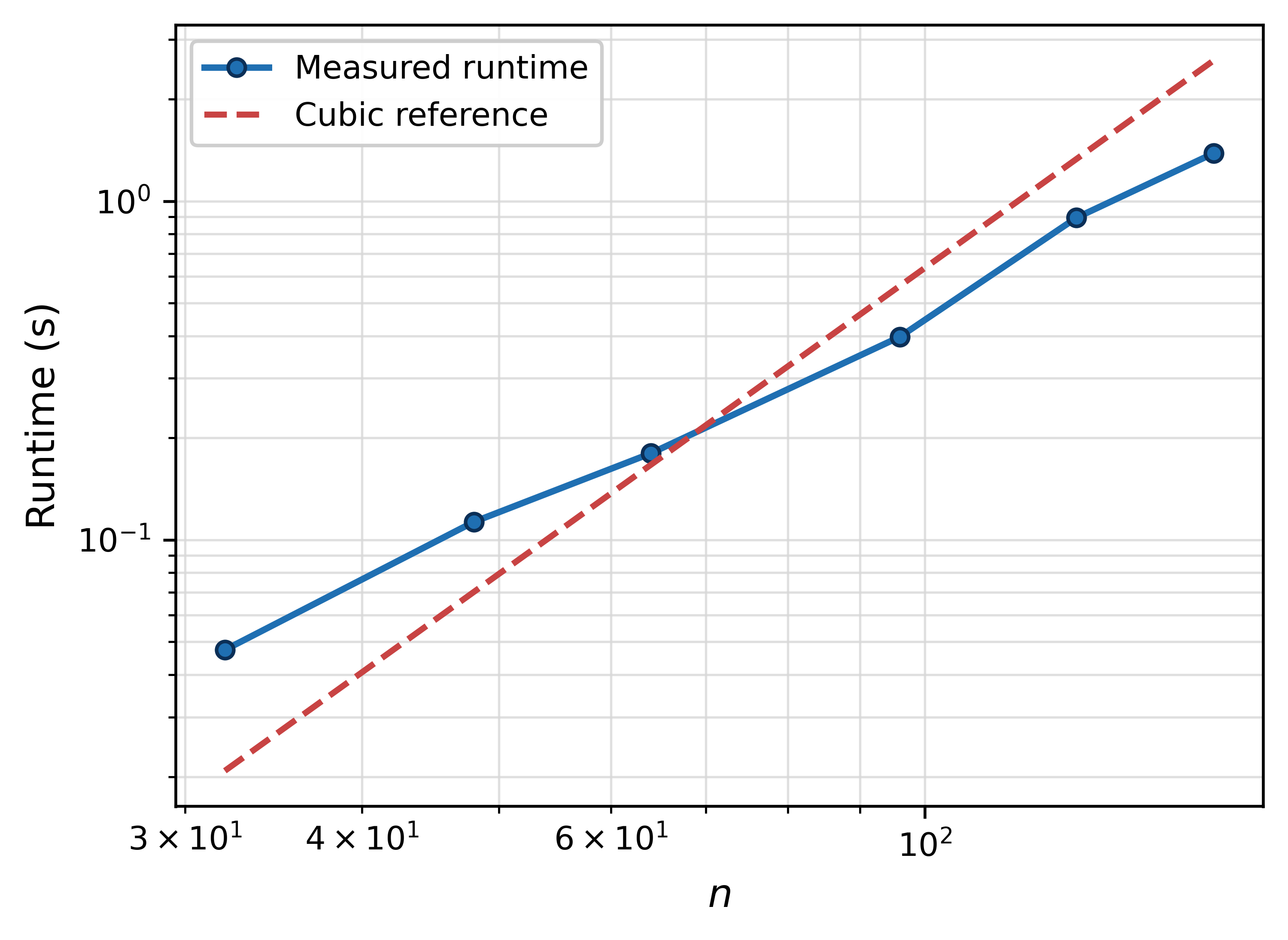}
    \hfill
    \includegraphics[width=0.49\textwidth]{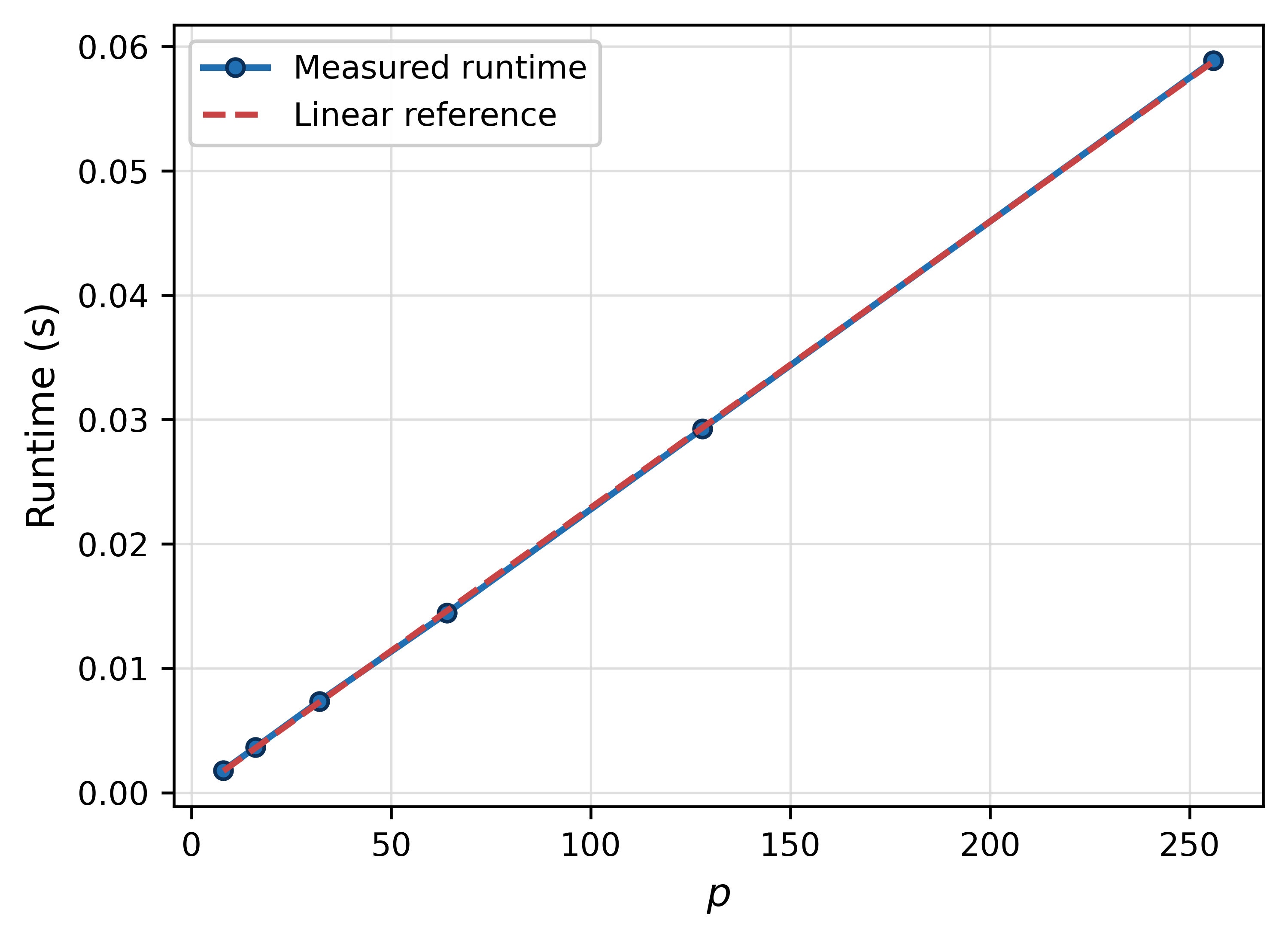}
    \caption{Runtime behavior of the T-Williamson normal form. Average runtime as $n$ for fixed $p=16$, with a cubic reference curve (left). 
    Average runtime as $p$ for fixed $n=8$, with a linear reference curve (right).}
    \label{fig:cost_analysis}
\end{figure}
\end{example}

\subsection{Decohering Quantum System}

The T-Williamson normal form can be used to organize a family of sampled covariance matrices once it is encoded in to the slices of a tensor in the Fourier domain.  We illustrate this with a two-mode squeezed vacuum state interacting with a thermal environment, while keeping the physical interpretation at the level of the slices. 
We emphasize that this construction is a Fourier-domain encoding. 
The sampled covariance matrices are not obtained by applying the DFT to a real-valued tensor. 
Instead, they are defined directly as the Fourier-domain slices of a tensor. 
This choice is consistent with the hypotheses of Theorem~\ref{thm:t_williamson}, which require the Fourier-domain slices to be real symmetric positive-definite. 
If the sampled covariance matrices were assigned to the spatial-domain frontal slices of a real tensor and the DFT were then applied, the resulting Fourier-domain slices would generally be complex Hermitian rather than real symmetric. 
In that case, Theorem~\ref{thm:t_williamson} would not be applicable.

We take $n=2$, $p=64$, and set the physical parameters for the dynamics: $r=1$ is the squeezing parameter of the initial state, $\bar{n}_{\text{th}}=0.5$ is the mean thermal photon number of the environment, and $\kappa=0.3$ is the decay rate governing the system-bath interaction.
In the phase-space ordering
$$
[\hat{q}_1,\hat{q}_2,\hat{p}_1,\hat{p}_2]^T,
$$
where the pair $(\hat{q}_j,\hat{p}_j)$ refers to the position-like component and momentum-like component of the mode $j$, the initial covariance matrix of the two-mode squeezed vacuum state (\cite{weedbrook2012gaussian}) is
$$
\M{M}_0 = \begin{bmatrix}
\cosh(2r) & \sinh(2r) & 0 & 0 \\
\sinh(2r) & \cosh(2r) & 0 & 0 \\
0 & 0 & \cosh(2r) & -\sinh(2r) \\
0 & 0 & -\sinh(2r) & \cosh(2r)
\end{bmatrix}.
$$
The interaction with a thermal bath is modeled by
$$
\M{M}(t) = e^{-\kappa t} \M{M}_0 + (1 - e^{-\kappa t}) (2\bar{n}_{\text{th}} + 1)\M{I}_4,
$$
where $(2\bar{n}_{\text{th}} + 1)\M{I}_4$ is the thermal covariance for two modes (\cite{weedbrook2012gaussian}). 
We sample the time interval $[0,12]$ at
$$
t_i = \frac{12(i-1)}{p-1},
\quad i=1,\dots,p.
$$
We encode the sampled family slice-wise for a tensor in the Fourier domain by setting
$$
\widehat{\M{M}}^{(i)} = \M{M}(t_i),
$$
for $i=1,\dots,p.$ This places the time-dependent covariance family in a single algebraic object. Since each matrix $\M{M}(t_i)$ is real symmetric positive-definite, the hypothesis of Theorem~\ref{thm:t_williamson} is satisfied slice-wise in the Fourier domain. Proposition~\ref{prop:real_recovery} identifies an additional Fourier conjugate-symmetry condition that would force the spatial-domain tensor itself to be real-valued, but that condition is not imposed here.

To analyze the physical significance of the slice-wise symplectic eigenvalues, we apply Theorem~\ref{thm:t_williamson} to decompose the covariance tensor as
$$
\T{M} = \T{S}^H * \T{D} * \T{S},
$$
where $\T{S}$ is a T-symplectic tensor and $\T{D}$ is a T-diagonal tensor. From this T-Williamson decomposition, we obtain the following diagonal matrix slices
$$
\widehat{\M{\Lambda}}^{(i)} =
\diag(\lambda_1^{(i)},\lambda_2^{(i)})
$$
for each $i=1,\dots,p$. Because the Fourier-domain slices of $\T{M}$ are exactly the covariance matrices $\M{M}(t_i)$, the diagonal entries $\lambda_1^{(i)}$ and $\lambda_2^{(i)}$ are precisely the ordinary symplectic eigenvalues of $\M{M}(t_i)$.
Under the convention $\hbar=2$, where $\hbar$ is the reduced Planck constant, the Heisenberg uncertainty principle requires
$$
\lambda_j^{(i)} \ge 1,
\quad j=1,2,
$$
for each $i=1,\dots,p$. By using the relation $\lambda_j=2\bar{n}_j+1$, given by Williamson's theorem on Gaussian states, on the entropy profile formula in~\cite{serafini2004symplectic}, we get
\begin{equation*}
    S_i = \sum_{j=1}^{n} g(\lambda_j^{(i)}),
\end{equation*}
where $g(\lambda) = \frac{\lambda + 1}{2} \log_2\left(\frac{\lambda + 1}{2}\right) - \frac{\lambda - 1}{2} \log_2\left(\frac{\lambda - 1}{2}\right)$. 
To compute the entanglement of the two pairs $(\hat{q}_j,\hat{p}_j)$, $j=1,2$, we introduce the partially transposed covariance matrix
$$
\widetilde{\M{M}}(t_i) = \M{P}\M{M}(t_i)\M{P},
\quad
\M{P} := \diag(1,1,1,-1),
$$
where $\M{P}$ sends $\hat{p}_2$ to $-\hat{p}_2$, and denote its symplectic eigenvalues by $\tilde{\lambda}_1^{(i)}$ and $\tilde{\lambda}_2^{(i)}$. The entanglement profile is then
\begin{equation*}
    E_N(\rho_i) = \sum_{j=1}^{n} \max(0, -\log_2(\tilde{\lambda}_j^{(i)})),
\end{equation*}
by ~\cite{adesso2004extremal}, where $\rho_i$ is the Gaussian state associated to the covariance matrix $\M{M}(t_i)$.

The entropy and entanglement profiles
obtained from these sampled covariance matrices are illustrated in Figure~\ref{fig:quantum_dynamics}. For the chosen parameters, the entropy increases from $0$ to approximately $2.826$ over the sampled interval, whereas the  entanglement profile
decreases from approximately $2.885$ to $0$. 
Thus, the two modes $(\hat{q}_1,\hat{p}_1)$ and $(\hat{q}_2,\hat{p}_2)$ of the system approach a thermal equilibrium and are no longer entangled after interacting with the thermal environment.
This behavior is consistent with the physical picture of decoherence: as $t$ increases, the covariance matrix approaches the thermal limit
$$
\M{M}(t) \to (2\bar{n}_{\text{th}}+1)\M{I}_4,
$$
the symplectic eigenvalues move toward the thermal value $2\bar{n}_{\text{th}}+1$, the entropy moves toward the corresponding thermal entropy, and the entanglement vanishes. 
The example shows how the T-product formalism organizes and analyzes a sampled covariance family while keeping the physical interpretation at the level of the Fourier-domain slices.

\begin{figure}[ht!]
    \centering
    \includegraphics[width=0.92\textwidth]{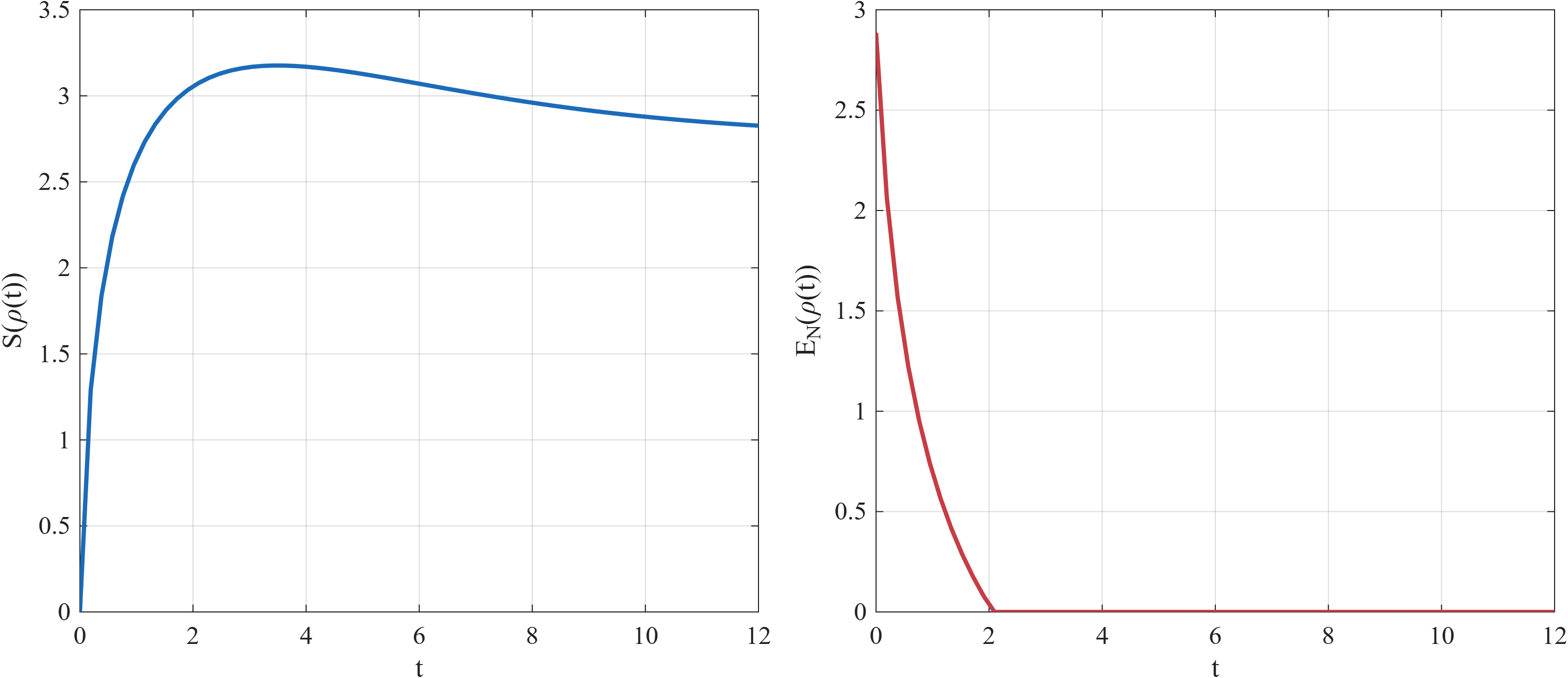}
    \caption{Quantum-dynamical profiles for the decohering two-mode squeezed state. The left plot shows the entropy profile $S_i$ over time, and the right plot shows the entanglement profile $E_N(\rho(t))$. 
    }
    \label{fig:quantum_dynamics}
\end{figure}

\begin{remark}
Each covariance matrix $\widehat{\M{M}}^{(i)}=\M{M}(t_i)$ is real, but the inverse DFT of the family $\{\widehat{\M{M}}^{(i)}\}_{i=1}^{p}$ need not be real in the spatial domain. 
The example is therefore formulated as a Fourier-domain encoding of a sampled covariance family rather than as a tensor-valued physical trajectory. 
This is compatible with the present T-product framework over $\C$, where the quantum mechanical formulas remain the standard slice-wise Gaussian-state formulas and the tensor contribution is the T-Williamson decomposition of the encoded family.
\end{remark}

\section{Conclusion and Future Work}
\label{sec:conclusion}

We introduced Hamiltonian and symplectic tensor classes in the T-product algebra and established their basic Fourier-domain characterizations. 
We proved a constructive T-Williamson normal form for tensors whose Fourier-domain slices are real symmetric positive-definite matrices, together with an obstruction result showing that the same statement does not extend directly to arbitrary Hermitian positive-definite Fourier-domain slices under the convention $\M{S}^H \M{J}\M{S} = \M{J}$.
We also derived a real-valued recovery criterion under Fourier conjugate symmetry. Numerical experiments supported the constructive aspects of the theory and included a quantum example based on Fourier-domain tensor slices of a sampled covariance family.

Several questions remain open. Proposition~\ref{prop:complex_obstruction} shows that the Hermitian complex case cannot be treated by a direct extension of the real case. 
This suggests considering other symplectic conventions, such as the transpose-based complex symplectic group, or developing a normal form adapted to Hermitian positive-definite Fourier-domain slices.
The computational aspects also require further study. The present construction is carried out slice-wise and has arithmetic cost $O(pn^3)$. 
Faster methods may be possible when the Fourier-domain slices have additional structure, such as sparsity, low-rank perturbations, or block structure.
Finally, the framework may be applicable to other parameter-dependent structured matrix families, including those arising in control, wave propagation, and Gaussian quantum dynamics.

\section*{ Declaration of competing interest}
The authors declare that they have no competing interests.

\section*{ Data availability}
No external dataset was used.

\section*{ Acknowledgments}
All authors contributed to the conception, mathematical analysis, and writing of the manuscript. All authors read and approved the final manuscript.

\section*{ Funding}

The work of T.~Kim was supported by the National Research Foundation of Korea (NRF) grant funded by the Korea government (MSIT) (No. 2022R1A5A1033624\&\,RS-2024-00342939\&\,RS-2025-25436769). 
The work of S. L\'opez-Moreno was supported by the National Research Foundation of Korea (NRF) grant funded by the Korea government (MSIT) (RS-2024-00406152).

\bibliography{references}

\end{document}